\DeclareMathOperator{\decomp}{\sqcup}
\newcommand{\NN}{\mathbb N}
\newcommand{\set}[1]{\left\{\,#1\,\right\}}  
\newcommand{\with}{\ \vrule\ }
\DeclareMathOperator{\dcup}{\dot{\cup}}
\newcommand{\defa}{:=}
\newcommand{\comp}[1]{\overline{#1}}
\newcommand{\cc}[1]{\binom{[n]}{2} \setminus #1}
\newcommand{\wo}[1]{\omega_{0,{#1}}}
\newcommand{\rev}[1]{\check{#1}}
\newcommand{\id}[1]{id_{#1}}
\newcommand{\Sn}{S_n}
\newcommand{\T}[1][\pi]{T(#1)}
\newcommand{\G}[1][\pi]{G(#1)}
\newcommand{\numberlike}[2]{%
	\@namedef{c@#1}{\@nameuse{c@#2}}%
}
\newcommand{\mynewtheorem}[2]{
	\newtheorem{#1}{#2}[section]
	\labelformat{#1}{#2~##1}
}
\theoremstyle{plain}
\theoremstyle{definition}
\begin{document}
\title{Decomposing Sets of Inversions}
\author{Lukas Katth\"an}
\address{Fachbereich Mathematik und Informatik, Philipps-Universit\"at Marburg 
}%
\email{katthaen@mathematik.uni-marburg.de}%
\thanks{This work was partially supported by the DAAD}

\date{\today}
\subjclass[2010]{Primary 05E40, 52B12; Secondary 05A05} %
\keywords{Inversion sets, Permutation graphs, Simple Permutations, Linear ordering polytope}%

\begin{abstract}
In this paper we consider the question how the set of inversions of a permutation $\pi \in S_n$ can be partitioned into two subsets, which are themselves inversion sets of permutations in $S_n$.
Our method is to study the modular decomposition of the inversion graph of $\pi$.
A correspondence to the substitution decomposition of $\pi$ is also given.
Moreover, we consider the special case of multiplicative decompositions.
\end{abstract}
\maketitle
%
%
\section{Introduction}\label{sec:intro}
For a permutation $\pi \in \Sn$ denote its \emph{inversion set} by
\[ \T \defa \set{ \set{i,j} \in \NN^2 \with 1 \leq i < j \leq n, \pi(i) > \pi(j) } \,.\]
In this paper, we address the following problem:
\begin{problem}\label{problem}
For a given permutation $\pi \in \Sn$, give a description of all $\tau_1, \tau_2 \in \Sn$ such that
\begin{equation}\label{eq:zerlegung}\begin{split}
\T[\tau_1] &\cup \T[\tau_2] = \T \\
\T[\tau_1] &\cap \T[\tau_2] = \emptyset \\
\tau_1, \tau_2 &\neq \id{n} \;.
\end{split}
\end{equation}
\end{problem}
In other words, we want to find all ways to distribute the inversions of $\pi$ into two disjoint sets, such that each is itself the inversion set of a permutation.
The motivation behind this problem is as follows. In \cite{welker}, toric statistical ranking models are considered. One of these model is the \emph{inversion model}, which is also known as \emph{Babington-Smith Model} in the statistics literature, see \cite{marden}. The toric ideal $I_{BS}$ associated to this model is the kernel of the map
\begin{align*}
k[X_{\pi} \with \pi \in \Sn] &\rightarrow k[X_{ij} \with 1\leq i < j \leq n] \\
X_{\pi} &\mapsto \prod_{\set{i,j}\in \T} X_{ij} 
\end{align*}
It follows from general theory that $I_{BS}$ is generated by differences of monomials \cite[Lemma 4.1]{sturmfels}.
By construction, a difference of monomials $\prod_i X_{\pi_i} - \prod_i X_{\tau_i}$ is contained in $I_{BS}$ if and only if  $\bigcup_i \T[\pi_i] = \bigcup_i \T[\tau_i]$ as multisets. Thus the generators of the ideal $I_{BS}$ encode the relations among the inversion sets of permutations.
Therefore, a set of generators for this ideal not only provides algebraic information for the Babington-Smith Model but also encodes fundamental information about the combinatorics of permutations.
However, $I_{BS}$ turns out to be a rather large and complex object, for example the authors of \cite{welker} found computationally that for $n=6$ there are as many as $130377$ quadratic generators and there are also generators of higher degree.
Therefore, as a first step in understanding this object, we study its quadratic generators for all $n$.
The ideal $I_{BS}$ is invariant under the right action of the $\Sn$, so if $m := X_{\pi_1} X_{\pi_2} - X_{\tau_1} X_{\tau_2} \in I_{BS}$, then also $m \pi_1^{-1} = X_{\id{n}} X_{\pi_2\pi_1^{-1}} - X_{\tau_1\pi_1^{-1}} X_{\tau_2\pi_1^{-1}} \in I_{BS}$.
Therefore we can restrict our attention to binomials of the form
\[ X_{\id{n}} X_{\pi} - X_{\tau_1} X_{\tau_2} \,.\]
From our discussion, the following observation is immediate:
\begin{proposition}
A binomial $X_{\id{n}} X_{\pi} - X_{\tau_1} X_{\tau_2}$ lies in $I_{BS}$ if and only if $\pi,\tau_1$ and $\tau_2$ satisfy \eqref{eq:zerlegung}.
\end{proposition}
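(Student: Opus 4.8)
The plan is to deduce the statement directly from the description of $I_{BS}$ recorded above, so the argument will be short. Recall that $I_{BS}$ is the kernel of the monomial map $X_\sigma \mapsto \prod_{\{i,j\}\in \T[\sigma]} X_{ij}$, and hence that a difference of monomials $\prod_i X_{\pi_i} - \prod_i X_{\tau_i}$ lies in $I_{BS}$ exactly when $\bigcup_i \T[\pi_i] = \bigcup_i \T[\tau_i]$ \emph{as multisets}. I would specialize this equivalence to the two monomials $X_{\id{n}}X_\pi$ and $X_{\tau_1}X_{\tau_2}$ and then unwind what the resulting multiset equality says.

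Concretely, since $\id{n}$ has no inversions we have $\T[\id{n}] = \emptyset$, so membership of $X_{\id{n}}X_\pi - X_{\tau_1}X_{\tau_2}$ in $I_{BS}$ is equivalent to the multiset identity $\T = \T[\tau_1] \cup \T[\tau_2]$. The key point is that the left-hand side $\T$ is an ordinary set, i.e.\ every pair occurs in it with multiplicity one; hence this multiset identity holds if and only if, first, the underlying sets agree, $\T[\tau_1] \cup \T[\tau_2] = \T$, and second, no pair is contributed by both $\tau_1$ and $\tau_2$, that is $\T[\tau_1] \cap \T[\tau_2] = \emptyset$. These are exactly the first two lines of \eqref{eq:zerlegung}, and the equivalence runs in both directions.

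It then remains to account for the side condition $\tau_1, \tau_2 \neq \id{n}$. Here I would invoke the elementary fact that a permutation is uniquely determined by its inversion set, so that $\T[\sigma] = \emptyset$ forces $\sigma = \id{n}$. If one of the $\tau_i$, say $\tau_1$, were $\id{n}$, then the two conditions above would give $\T[\tau_2] = \T$ and hence $\tau_2 = \pi$, so that $X_{\id{n}}X_\pi - X_{\tau_1}X_{\tau_2}$ would be the zero polynomial rather than a genuine binomial (a difference of two \emph{distinct} monomials); conversely a genuine binomial satisfying \eqref{eq:zerlegung} is certainly nonzero. I do not expect any real obstacle in carrying this out; the only things that need a little care are that it is the multiset union, and not the ordinary union, that forces the disjointness condition, and the small amount of bookkeeping around the degenerate case just discussed.
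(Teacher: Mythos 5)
Your proposal is correct and follows exactly the route the paper intends: the paper gives no explicit proof, calling the proposition immediate from the multiset-union description of $I_{BS}$, and your unwinding of that description (using $\T[\id{n}]=\emptyset$, the fact that a multiset equal to a genuine set forces disjointness, and the determination of a permutation by its inversion set) is precisely the omitted argument. Your extra care about the degenerate case $\tau_i=\id{n}$ yielding the zero polynomial is a reasonable tidying-up of a point the paper leaves implicit.
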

Thus \ref{problem} is equivalent to the problem of describing the quadratic generators of $I_{BS}$.
In the recent preprint \cite{wehlau}, the following closely related question is considered:
Let $\wo{n} \in \Sn$ denote the permutation of maximal length (i.e. the one mapping $i \mapsto n+1-i$).
\begin{problem}
Give a description of all sets $\set{\tau_1,\ldots,\tau_l}\subset\Sn$ such that $\T[\wo{n}] = \bigcup_i \T[\tau_i]$ and $\T[\tau_i] \cap \T[\tau_j] = \emptyset$ for $i \neq j$.
\end{problem}
The motivation and the methods employed by the authors of \cite{wehlau} are different from ours, but some intermediate results of this paper were also found independently there. In particular, \ref{lemma:deg3} and part of \ref{cor:crit} resemble Proposition 2.2 and Proposition 3.14 in \cite{wehlau}.

Another perspective on a toric model is via its \emph{model polytope}.
The model polytope associated to the inversion model is the \emph{linear ordering polytope}\cite{welker}, which is a well-studied object in combinatorial optimization, see \cite[Chapter 6]{marti}.
In \cite{young} the following question is addressed:
\begin{problem}
Which permutations $\pi \in S_n$ are neighbours of the identity permutation in the graph of the linear ordering polytope?\footnote{The linear ordering polytope is called the `permutation polytope' in \cite{young}.}
\end{problem}
In \cite{young}, a characterization of these permutations is obtained,
but as we show after \ref{cor:crit} there is a gap in the proof.
Nevertheless, the result from \cite{young} is correct and we extend the result and provide a proof in \ref{cor:crit}.
It turns out that a permutation has a decomposition as in \eqref{eq:zerlegung} if and only if it is not a neighbour of the identity permutation in the graph of the linear ordering polytope.
However, in the present paper we are interested in a description of all possible decompositions of type \eqref{eq:zerlegung}.

This paper is divided into four sections and an appendix.
In \ref{sec:prelim} we review the concept of modular decomposition for graphs, the characterisation of inversion sets of permutations and we discuss blocks of permutations.
In \ref{sec:main}, we prove our main results.
In \ref{thm:klass}, we give an answer to \ref{problem} in terms of the modular decomposition of the inversion graph of $\pi$. 
Moreover, we consider a modification of \eqref{eq:zerlegung}, where we impose the further restriction that $\pi = \tau_1 \tau_2$.
We show in \ref{thm:mult} that if $\pi$ admits a solution of \eqref{eq:zerlegung}, then it also admits a solution satisfying $\pi = \tau_1 \tau_2$.
Since \ref{problem} is formulated without referring to graphs, in \ref{thm:invdecomp} we give a reformulation of \ref{thm:klass} which avoids notions from graph theory.
In \ref{sec:further}, we show that the problem of decomposing an inversion set into three or more inversion sets can be reduced to \eqref{eq:zerlegung}. Moreover, we show that permutations of sufficiently high length always admit a solution of \eqref{eq:zerlegung}.
In the appendix we prove a result connecting the blocks of a permutation with the modules of its inversion graph.
The result from the appendix seems rather natural to us, but since we were not able to find it in the literature, we include a proof.

\section{Preliminaries}\label{sec:prelim}
\subsection{Notation}
Let us first fix some notation. 
We denote a graph $G$ on a vertex set $V$ with edge set $E \subset V \times V$ by $G=(V,E)$. All our graphs are undirected and simple. For two vertices $v,w$, let $vw$ denote the (undirected) edge between $v$ and $w$. We say $v$ and $w$ are connected in $G$ if $vw \in E$ and we write $vw \in G$ by abuse of notation.

For a natural number $n \in \NN$, we write $[n]$ for the set $\set{1,\dotsc,n}$. For a finite set $S$, we write $\binom{S}{2}$ for the set of subsets of $S$ containing exactly $2$ elements.
For $\pi \in\Sn$ we denote by $\T$ the \emph{inversion set} 
\[\set{\set{i,j}\in \binom{[n]}{2} \with i<j, \pi(i)>\pi(j)} \;.\]
This set can be considered as the edge set of an undirected graph $\G = ([n], \T)$, the \emph{inversion graph} of $\pi$.
We consider this graph without the natural order on its vertices, therefore in general $\G$ does not uniquely determine $\pi$.
The graphs arising this way are called \emph{permutation graphs}, see \cite{brandt}.
By another abuse of notation, we write $ij \in \T$ (resp. $ij \in \G$) if $\set{i,j}$ is an inversion of $\pi$.
For two subsets $A,B \subset [n]$, we write $A < B$ if $a < b$ for every $a\in A, b \in B$.
\subsection{Modular decomposition of graphs}
In this subsection we review the modular composition for graphs, see \cite[Chapter 1.5]{brandt} for a reference.
Let $G = (V,E)$ be a graph.
\begin{definition}[\cite{brandt}]
\begin{enumerate}	
	\item A set $M \subset V$ is called a \emph{module} of $G$ if for $m_1,m_2 \in M$ and $v \in V\setminus M$ it holds that $vm_1 \in G$ if and only if $vm_2 \in G$.
	\item A module $M$ is called \emph{strong} if for every other module $N$ either $M\cap N = \emptyset$, $M\subset N$ or $N \subset M$ holds.
\end{enumerate}
\end{definition}
In \cite[p. 14]{brandt} it is shown that for every module there is a unique minimal strong module containing it.
A graph is called \emph{prime} if $V$ and its vertices are its only modules.
We denote by $\comp{G}$ the complementary graph $\comp{G} = (V, \binom{V}{2}\setminus E)$ of $G$. For a subset $U \subset V$, we denote by $G_U$ the induced subgraph of $G$ on $U$.
\begin{theorem}[Theorem 1.5.1, \cite{brandt}] \label{thm:moddecomp}
Let $G=(V,E)$ be a graph with at least two vertices. Then the maximal strong submodules (\emph{m.s.s.}) of $G$ form a partition of $V$ and exactly one of the following conditions hold:
\begin{description}
	\item[Parallel case] $G$ is not connected. Then its m.s.s. are its connected components.
	\item[Serial case] $\comp{G}$ is not connected. Then the m.s.s. of $G$ are the connected components of $\comp{G}$.
	\item[Prime case] Both $G$ and $\comp{G}$ are connected. Then there is a subset $U\subset V$ such that
		\begin{enumerate}
			\item $\#U > 3$,
			\item $G_U$ is a maximal prime subgraph of $G$,
			\item and every m.s.s. $M$ of $G$ has $\# M\cap U = 1$.
		\end{enumerate}
\end{description}
\end{theorem}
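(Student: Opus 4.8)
This is a classical result, essentially due to Gallai; see \cite{brandt} for a reference. We describe how one proves it. The plan is to first collect the elementary closure properties of modules, use them to see that the m.s.s.\ partition $V$ and that the trichotomy is exhaustive and exclusive, then verify the parallel and serial cases directly, and finally reduce the prime case to a single structural claim about maximal proper modules, which is where the real work lies. The closure facts, all immediate from the definition, are: for modules $M,N$ the intersection $M\cap N$ is a module, and if $M\cap N\neq\emptyset$ then $M\cup N$ and $M\setminus N$ are modules too; a set is a module of $G$ if and only if it is a module of $\comp G$; and $V$ and all singletons are modules, automatically strong. Since a strong module overlaps no module, the strong modules form a laminar family; as $\#V\geq 2$ every singleton is a proper strong module, so every vertex lies under a unique maximal proper strong module, and distinct such modules are disjoint by laminarity. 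Hence the m.s.s.\ partition $V$. The three cases are clearly exhaustive, and they are exclusive because the complement of a disconnected graph is connected.

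For the parallel case ($G$ disconnected) I would show that each connected component $C$ is a module (no vertex outside $C$ has a neighbour in $C$) and is strong: if a module $N$ overlapped $C$, applying the module condition with witnesses in $N\setminus C$ and in $C\setminus N$ shows that every vertex of $N\cap C$ has all its neighbours inside $N\cap C$, so $N\cap C$ is a union of components sitting inside the single component $C$, forcing $C\subseteq N$, a contradiction. The same computation shows that a module meeting two components is a union of components, and such a set, if $\neq V$, overlaps the module obtained by exchanging one of its components for an outside component, so it is not strong. Hence every proper strong module is contained in a single component, and the maximal proper strong modules are the components themselves. The serial case follows by applying this to $\comp G$, using that $G$ and $\comp G$ have the same modules.

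For the prime case ($G$ and $\comp G$ connected) I would reduce everything to the claim that distinct maximal proper modules of $G$ (maximal among modules $\neq V$) are disjoint and coincide with the m.s.s. Granting the claim, write $M_1,\dots,M_k$ for the m.s.s.; they partition $V$ because every singleton extends to a maximal proper module. Since each $M_i$ is a module, adjacency between $M_i$ and $M_j$ is independent of the chosen vertices, so it defines a quotient graph $Q$ on $\{1,\dots,k\}$ with $G_U\cong Q$ for every transversal $U=\{u_1,\dots,u_k\}$, $u_i\in M_i$. A non-trivial module of $Q$ would correspond to a module $N\neq V$ of $G$ containing at least two of the $M_i$; but $N$ lies in a maximal proper module, hence by the claim in some $M_j$, forcing two distinct $M_i$ to be equal, which is impossible. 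So $Q$ has no non-trivial module, and a short case check excludes $k\leq 3$ (for $k\leq 2$ one of $G,\comp G$ is disconnected, and every $3$-vertex graph has a non-trivial module), so $G_U\cong Q$ is prime with $\#U=k>3$. Adjoining any further vertex to $U$ duplicates a vertex of some $M_i$, so $M_i$ restricted to the enlarged set is a non-trivial module and $G_U$ is therefore a maximal prime subgraph; and $\#(M_i\cap U)=1$ by construction.

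Proving the claim is what I expect to be the main obstacle. Suppose $M\neq M'$ are maximal proper modules with $M\cap M'\neq\emptyset$; then $M\cup M'$ is a module properly containing both, hence $M\cup M'=V$. Put $A=M\setminus M'$, $B=M'\setminus M$ and $C=M\cap M'$, all non-empty. The module condition for $M$ forces the adjacency of a vertex $a\in A$ to a vertex $b\in B$ to depend only on $b$, and the condition for $M'$ forces it to depend only on $a$; consistency then forces the bipartite graph between $A$ and $B$ to be complete or empty, and after passing to $\comp G$ if necessary we may assume it is empty. Then each $a\in A$, being outside the module $M'=B\cup C$, is adjacent to all of $B\cup C$ or to none, hence to none; so $A$ has no edge to its complement, contradicting connectedness of $G$ since $\emptyset\neq A\neq V$. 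Thus distinct maximal proper modules are disjoint and therefore partition $V$; moreover any such $M$ is strong, for a module $N$ overlapping $M$ would, after extension to a maximal proper module $N^*$, satisfy $N^*=M$ by the disjointness just proved, whence $N\subseteq M$, contrary to overlap. Conversely every m.s.s.\ extends to a maximal proper module, which is strong, so the two notions coincide, completing the argument.
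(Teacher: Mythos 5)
The paper states this result as a quoted classical theorem (Gallai's modular decomposition theorem, Theorem~1.5.1 in the cited survey) and gives no proof of its own, so there is nothing in the text to compare your argument against; what you have written is a correct, self-contained rendering of the standard proof. Your reduction of the prime case to the statement that the maximal proper modules are pairwise disjoint, via the $A$--$B$--$C$ analysis showing the bipartite graph between $M\setminus M'$ and $M'\setminus M$ is complete or empty and then contradicting connectedness of $G$ or $\comp{G}$, is exactly the classical argument, and the verifications in the parallel and serial cases, the identification of m.s.s.\ with maximal proper modules, the primality of the quotient, and the exclusion of $k\leq 3$ are all sound. One small inaccuracy: in your list of closure properties, the hypothesis under which $M\setminus N$ is a module is not $M\cap N\neq\emptyset$ but $N\setminus M\neq\emptyset$ (take $N$ a singleton module properly inside a path $P_4$ with $M=V$ for a counterexample to your version). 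This does no damage, since everywhere you actually need a set difference of modules the two modules overlap, so $N\setminus M\neq\emptyset$ holds anyway --- in fact you never invoke this closure property in the body of the argument, treating $A=M\setminus M'$ directly via the module conditions for $M$ and $M'$. You might also state explicitly that ``maximal prime subgraph'' is meant in the sense that no proper superset of $U$ induces a prime subgraph, which is precisely what your duplication argument establishes.
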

We call a module $M$ of $G$ \emph{parallel}, \emph{serial} or \emph{prime} corresponding to which condition of above theorem is satisfied by $G_M$.
As a convention, we consider single vertices as parallel modules.
By the following lemma, we do not need to distinguish between modules of $G$ contained in a module $M$ and modules of $G_M$.
\begin{lemma}\label{prop:untermod}
	Let $G$ be a graph, $M$ a module of $G$ and $U \subset M$ a subset. Then $U$ is a module of $G$ if and only if it is a module of $G_M$.
	Moreover, $U$ is a strong module of $G$ if and only if it is strong as a module of $G_M$.
\end{lemma}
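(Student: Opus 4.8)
The plan is to prove the two "if and only if" statements in turn: the first by directly unwinding the definition of a module, and the second by leveraging the first together with a little calculus of overlapping modules. For "$U$ is a module of $G$ if and only if it is a module of $G_M$", I would fix $u_1,u_2\in U$ and a vertex $v\in V\setminus U$ and verify the defining condition, distinguishing whether $v\in M$. If $v\in M$ (so $v\in M\setminus U$), then $u_1,u_2,v$ all lie in $M$, where $G$ and the induced subgraph $G_M$ have the same edges, so $vu_1\in G\Leftrightarrow vu_2\in G$ is verbatim the corresponding condition in $G_M$. If $v\notin M$, then $u_1,u_2\in U\subseteq M$ and $M$ being a module of $G$ already gives $vu_1\in G\Leftrightarrow vu_2\in G$, so such $v$ constrain neither side. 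Collecting both cases proves the equivalence. I would also record the trivial but useful remark that the set-theoretic trichotomy ``$N\subseteq U$, or $U\subseteq N$, or $N\cap U=\emptyset$'' is insensitive to whether $N$ and $U$ are viewed inside $G$ or inside $G_M$, as long as both are subsets of $M$.

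Granting this, the implication ``$U$ strong in $G$ $\Rightarrow$ $U$ strong in $G_M$'' is immediate: a module $N$ of $G_M$ is, by the first part, a module of $G$ with $N\subseteq M$, so strongness of $U$ in $G$ yields one of the three alternatives, which is the same statement in $G_M$. For the converse I would show that no module $N$ of $G$ overlaps $U$, meaning $N\cap U\neq\emptyset$ with neither set containing the other. Suppose one does. Because $U\subseteq M$, the possibilities $N\cap M=\emptyset$ and $M\subseteq N$ would force $N\cap U=\emptyset$ and $U\subseteq N$ respectively, contradicting overlap, so $M$ and $N$ must properly overlap. At this point I would use the standard fact that two overlapping modules $M,N$ of a graph have $M\cap N$ and $M\setminus N$ among the modules of that graph --- the first by the usual two-case check, the second by chasing adjacencies through a vertex of the nonempty set $N\setminus M$. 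Both $M\cap N$ and $M\setminus N$ lie in $M$, hence by the first part are modules of $G_M$, so the strongness of $U$ in $G_M$ applies to each.

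The rest is bookkeeping. Applying the trichotomy to $U$ and $M\cap N$: the options $U\cap(M\cap N)=\emptyset$ and $U\subseteq M\cap N$ give $U\cap N=\emptyset$ and $U\subseteq N$, contradicting overlap, so $M\cap N\subseteq U$. Applying it to $U$ and $M\setminus N$: the options $U\cap(M\setminus N)=\emptyset$ and $U\subseteq M\setminus N$ give $U\subseteq M\cap N\subseteq N$ and $U\cap N=\emptyset$, again contradictions, so $M\setminus N\subseteq U$. Then $M=(M\cap N)\cup(M\setminus N)\subseteq U\subseteq M$, so $U=M$ --- and a module $N$ properly overlapping $U=M$ is exactly the degenerate case one excludes (either because $U$ is a proper subset of $M$, or because the statement is only invoked for strong $M$, for which no such $N$ exists). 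This contradiction finishes the converse. The one genuinely non-routine step is this converse, and inside it the reduction to the properly-overlapping case together with the module-calculus fact that $M\cap N$ and $M\setminus N$ remain modules; once those are in place, the trichotomy accounting is mechanical.
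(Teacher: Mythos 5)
Your argument is correct in substance and, for the hard direction, actually more careful than the paper's. The first part and the implication ``strong in $G$ implies strong in $G_M$'' match the paper's (essentially one-line) treatment. For the converse, the paper picks a module $N$ of $G$ overlapping $U$, quotes the fact that $M\setminus N$ is a module, and asserts that $M\setminus N$ is a module of $G_M$ overlapping $U$. You instead bring in both $M\cap N$ and $M\setminus N$, run the trichotomy on each, and squeeze out $U=M$. This buys something real: the paper never checks $M\setminus N\nsubseteq U$, which is precisely the condition that fails when $U=M$ --- and indeed for $U=M$ with $M$ a non-strong module of $G$ the lemma as literally stated is false, since the full vertex set of $G_M$ is vacuously strong there. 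Your closing remark correctly isolates this degenerate case; in all of the paper's applications $U$ is a proper subset, so nothing downstream is affected, but the hypothesis $U\subsetneq M$ (or ``$M$ strong'') really is needed.

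One genuine gap in the write-up: after ruling out $N\cap M=\emptyset$ and $M\subseteq N$ you conclude that $M$ and $N$ ``must properly overlap,'' but you have not excluded $N\subsetneq M$. That case can occur, and your subsequent use of $M\setminus N$ (which needs a vertex of $N\setminus M$ to chase adjacencies through) does not cover it. It is, however, disposed of in one line: such an $N$ is, by your first part, a module of $G_M$ overlapping $U$, contradicting strongness of $U$ in $G_M$ directly. With that sentence added, the proof is complete.
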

\begin{proof}
	The first statement is immediate from the definitions.
	For the second statement, first assume that $U$ is not strong as a module in $G_M$.
	We say that a module $N$ overlaps $U$ if $N \cap U \neq \emptyset$, $N \nsubseteq U$ and $U \nsubseteq N$ holds. 
	So by our assumption, there is a module $N \subset M$ of $G_M$ overlapping $U$. But $N$ is also a module of $G$, hence $U$ is not strong as a module of $G$.
	On the other hand, if $U$ is not strong as a module of $G$, then there is a module $N$ of $G$ overlapping $U$. Now, $M \setminus N$ is a module of $G$ (\cite[Prop 1.5.1 (ii)]{brandt}), and thus a module of $G_M$. But $M\setminus N$ overlaps $U$, so $U$ is not strong as a module of $G_M$.
\end{proof}
\noindent If $M$ and $N$ are two disjoint modules of $G$, then one of the following holds:
\begin{enumerate}
	\item Either every vertex of $M$ is connected to every vertex of $N$. Then we call $M$ and $N$ connected in $G$ and we write $MN$ for the set of edges between vertices of $M$ and $N$.
	\item Otherwise no vertex of $M$ is connected to any vertex of $N$.
\end{enumerate}
The edges connecting the m.s.s. of a module $M$ are called \emph{external edges} of $M$.
So $M$ is parallel if and only if it has no external edges.
Note that every edge of $G$ is an external edge for exactly one strong module.
We close this section by giving a description of the non-strong modules of $G$:
\begin{lemma}\label{lemma:L2}
	Let $G$ be a graph and let $M$ be a module which is not strong. Then $M$ is the union of some m.s.s. of a parallel or serial strong module.
	On the other hand, any union of m.s.s. of a parallel or serial strong module is a module.
\end{lemma}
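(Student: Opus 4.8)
The plan is to prove the two directions separately, using \ref{prop:untermod} to pass freely between modules of $G$ that are contained in a strong module $N$ and modules of the induced graph $G_N$, together with the fact recalled above that every module is contained in a unique minimal strong module. For the easy direction, let $N$ be a parallel or serial strong module and let $M$ be a union of some of its maximal strong submodules. Since $N$ is itself a module of $G$, by \ref{prop:untermod} it suffices to show that $M$ is a module of $G_N$. So take $m_1,m_2\in M$ and $v\in N\setminus M$. Then $v$ lies in some m.s.s.\ of $N$ not contained in $M$, while $m_1$ and $m_2$ lie in m.s.s.\ that are contained in $M$; in particular $v$ is separated from each of $m_1,m_2$ by a \emph{different} m.s.s. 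In the parallel case no edges run between distinct m.s.s., and in the serial case all of them do, so either way $vm_1\in G_N$ if and only if $vm_2\in G_N$, as needed. (One may also argue by noting that in the parallel case $M$ is a union of connected components of $G_N$, and in the serial case a union of connected components of $\comp{G_N}$, and that modules are insensitive to complementation.)

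For the other direction, let $M$ be a module of $G$ that is not strong. Since singletons, $\emptyset$ and $V$ are all strong, $M$ has at least two elements and is a proper subset of $V$; let $N$ be the minimal strong module containing $M$, so that $M\subsetneq N$ and $|N|\ge 2$, and \ref{thm:moddecomp} applies to $G_N$. The first step is to see that $M$ is not contained in a single m.s.s.\ $M'$ of $N$: otherwise $M'$ would be a strong module of $G$ (again by \ref{prop:untermod}) with $M\subseteq M'\subsetneq N$ — the inclusion being proper because the m.s.s.\ of $N$ partition it into at least two parts — contradicting the minimality of $N$. The second step is that any m.s.s.\ $M'$ of $N$ meeting $M$ satisfies $M'\subseteq M$, since strongness of $M'$ leaves only the alternatives $M\subseteq M'$ and $M'\cap M=\emptyset$, both of which are now excluded. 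Hence $M$ equals the union of the m.s.s.\ of $N$ that it meets, it meets at least two of them, and (as $M\subsetneq N$) it does not meet all of them.

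It remains to exclude the possibility that $N$ is prime. If it were, \ref{thm:moddecomp} would supply $U\subseteq N$ with $G_U$ prime and with every m.s.s.\ of $N$ meeting $U$ in exactly one vertex. Then $M\cap U$ is a module of $G_U$; it contains at least two vertices (one coming from each of the at least two m.s.s.\ lying inside $M$) and it is a proper subset of $U$ (it misses the $U$-vertex of any m.s.s.\ disjoint from $M$), which contradicts the primality of $G_U$. Therefore $N$ is parallel or serial and $M$ is a union of its m.s.s., completing the proof. The routine manipulations here — intersecting a module with the vertex set of an induced subgraph, passing to complements — are standard; the point that needs care is the second paragraph, where the minimality of $N$ must be used twice: once to keep $M$ from being absorbed into a single m.s.s., and then, in the prime case, to produce the final contradiction.
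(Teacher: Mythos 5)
Your proof is correct and follows essentially the same route as the paper: pass to the minimal strong module $N$ containing $M$, show $M$ is a union of m.s.s.\ of $N$, rule out the prime case via the set $U$ from \ref{thm:moddecomp}, and handle the converse through the all-or-nothing behaviour of external edges in the serial and parallel cases. The only difference is that you carefully justify the step ``$M$ is a union of some m.s.s.\ of $N$'' (using minimality of $N$ and strongness of the m.s.s.), which the paper asserts without argument; that extra care is welcome but does not change the approach.
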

\begin{proof}
	Let $N$ be the smallest strong module containing $M$. The m.s.s. of $N$ partition it, so $M$ is a union of some of them. 
	If $N$ is prime, then consider the set $U$ in \ref{thm:moddecomp}. Since $M$ is not strong, it is a union of at least two but not of all m.s.s. of $N$. So $M \cap U$ is a nontrivial submodule of $G_U$, contradicting \ref{thm:moddecomp}.
	Hence $N$ is either serial of parallel.
	
	For the converse, let $M$ be a union of m.s.s. of a serial or parallel strong module $N$. By \ref{prop:untermod}, it suffices to prove that $M$ is a module of $G_{N}$. Let $x,y \in M$ and $m \in N \setminus M$. The edges $xm, ym$ are both external in $N$. But if $N$ is serial, it has all possible external edges and if it is parallel, it has none at all. In both cases, the claim is immediate.
\end{proof}


\subsection{Inversion sets and blocks}
We recall the characterization of those sets that can arise as inversion sets of a permutation.
\begin{proposition}[Proposition 2.2 in \cite{yanagimoto1969partial}, see also \cite{bjoerner1991permutation}] \label{prop:reglabel}
	Let $T \subset \binom{[n]}{2}$ be a subset. The following conditions are equivalent:
	\begin{enumerate}
		\item There exists a permutation $\pi\in\Sn$ with $T=\T$.
		\item For every $1\leq i<j<k \leq n$ it holds that:
		\begin{itemize}
			\item If $ij, jk\in T$, then $ik \in T$.
			\item If $ik \in T$, then either $ij \in T $ or $jk \in T$.
		\end{itemize}
	\end{enumerate} 
\end{proposition}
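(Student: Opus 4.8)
The plan is to prove the two implications separately. The direction $(1)\Rightarrow(2)$ is immediate from the definition of an inversion set; the content lies in $(2)\Rightarrow(1)$, which I would handle by reconstructing an appropriate linear order on $[n]$ from $T$.

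For $(1)\Rightarrow(2)$, assume $T=\T[\pi]$ and fix $1\le i<j<k\le n$. If $ij,jk\in T$, then $\pi(i)>\pi(j)>\pi(k)$, so $\pi(i)>\pi(k)$ and $ik\in T$. If $ik\in T$, then $\pi(i)>\pi(k)$; since $\pi$ is injective, either $\pi(j)<\pi(i)$, which gives $ij\in T$, or $\pi(j)>\pi(i)>\pi(k)$, which gives $jk\in T$.

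For $(2)\Rightarrow(1)$, I would introduce a binary relation $\prec$ on $[n]$ designed to record the order of the values of the desired permutation: for distinct $a,b\in[n]$, set
\[
a\prec b\qquad\text{if}\qquad
\begin{cases}
ab\notin T, & a<b,\\
ab\in T, & a>b.
\end{cases}
\]
For each pair $\{a,b\}$ exactly one of $a\prec b$, $b\prec a$ holds, so $\prec$ is total and antisymmetric, and the crucial point is transitivity. Supposing $a\prec b$ and $b\prec c$, I would argue $a\prec c$ by splitting into the six possible orderings of $a,b,c$ as integers; in each case the conclusion is exactly one of the two bullets of (2) — or its contrapositive — applied to the triple $\{a,b,c\}$ written in increasing order. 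For instance, if $a<b<c$ then $a\prec b$ and $b\prec c$ say $ab\notin T$ and $bc\notin T$, and the second bullet of (2) forces $ac\notin T$, i.e. $a\prec c$; the remaining five cases are handled the same way. Granting transitivity, $\prec$ is a strict linear order on $[n]$, so defining $\pi(a):=1+\#\set{b\in[n]\with b\prec a}$ yields a permutation of $[n]$ with $\pi(a)<\pi(b)\iff a\prec b$. Finally, for $i<j$ one reads off $ij\in\T[\pi]\iff\pi(i)>\pi(j)\iff j\prec i\iff ij\in T$, hence $\T[\pi]=T$.

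The step I expect to be the main obstacle is the transitivity check: each of the six cases is a one-line consequence of (2), but the bookkeeping is delicate because the meaning of ``$a\prec b$'' flips between ``$\{a,b\}\in T$'' and ``$\{a,b\}\notin T$'' according to the integer order of $a$ and $b$, so one must be careful not to misidentify which bullet applies or to skip a case. The number of cases can in principle be halved using the symmetry that simultaneously reverses the ground set $[n]$ and complements $T$ inside $\binom{[n]}{2}$ (which interchanges the two bullets of (2)), but carrying out all six cases directly is probably the clearest presentation.
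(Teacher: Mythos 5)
Your proof is correct. Note that the paper itself gives no proof of this proposition: it is quoted from the literature (Yanagimoto--Okamoto, see also Bj\"orner--Wachs), so there is nothing internal to compare against. The argument you give is the standard one from those sources: $(1)\Rightarrow(2)$ is immediate, and for $(2)\Rightarrow(1)$ one defines the relation $\prec$ recording the intended order of values and verifies totality, antisymmetry and (via the six-case check, each case being one bullet of (2) or its contrapositive applied to the triple in increasing order) transitivity, then reads the permutation off the resulting linear order. All six cases of your transitivity check go through, and the final equivalence $ij\in\T\iff ij\in T$ for $i<j$ is exactly as you state, so there is no gap.
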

\noindent If a subset $T \subset \binom{[n]}{2}$ satisfies the conditions of above proposition, say $T = \T$, then so does its complement by $\cc{T} = \T[\wo{n} \pi]$.
We now take a closer look at the modules of the inversion graph of a permutation $\pi \in \Sn$.
Let us call a set $I \subset [n]$ of consecutive integers an \emph{interval}.
\begin{definition}[\cite{brignall2010survey}]
\begin{enumerate}
	\item A \emph{$\pi$-block} is an interval $I\subset [n]$ such that its image $\pi(I)$ is again an interval. 
	\item A $\pi$-block is called \emph{strong} if for every other $\pi$-block $J$ either $I\cap J = \emptyset$, $I\subset J$ or $J \subset I$ holds.
\end{enumerate}
\end{definition}
\noindent The importance of $\pi$-blocks for our purpose stems from the following theorem:
\begin{theorem}\label{thm:stronginterval}
Let $I \subset [n]$ and $\pi\in \Sn$. The following implications hold:
\begin{enumerate}
	\item $I$ is a $\pi$-block $\Longrightarrow$ $I$ is a module of $\G$
	\item $I$ is a strong $\pi$-block $\Longleftrightarrow$ $I$ is a strong module of $\G$
\end{enumerate}
In particular, every strong module of $\G$ is an interval.
\end{theorem}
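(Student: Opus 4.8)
I would handle~(1) and one auxiliary observation by direct case analysis, and then obtain~(2) (and with it the closing assertion) by induction on~$n$. For~(1): if $I$ is a $\pi$-block and $v\in[n]\setminus I$, then $v$ lies entirely below or entirely above $I$ in position (as $I$ is an interval) and $\pi(v)$ lies entirely below or entirely above $\pi(I)$ in value (as $\pi(I)$ is an interval); running through the four cases shows $vi\in\G$ either for every $i\in I$ or for no $i\in I$, so $I$ is a module. The same style of argument gives the partial converse I will need: \emph{if $I$ is an interval and a module of $\G$, then $I$ is a $\pi$-block} --- for otherwise some value $w$ with $\min\pi(I)<w<\max\pi(I)$ would be missing from $\pi(I)$, and then $v:=\pi^{-1}(w)$ would sit to one side of $I$ in position while $\pi(v)=w$ separates two values of $\pi(I)$, making $v$ adjacent to some but not all of $I$. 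So, for intervals, ``module'' and ``$\pi$-block'' are equivalent; this already yields the ``in particular'' clause once~(2) is proved.

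For~(2) I would prove by induction on $n$ the sharper statement that \emph{the maximal strong submodules (m.s.s.)\ of $\G$ are exactly the maximal proper strong $\pi$-blocks} (the case $n\le 1$ being trivial). Granting it, passing to an m.s.s.\ $M$ --- which is then an interval and a $\pi$-block --- the induced subgraph of $\G$ on $M$ is again an inversion graph, on $|M|<n$ vertices, and after relabelling $M$ order-preservingly as $[\,|M|\,]$ the intervals, the blocks, and (by~\ref{prop:untermod}) the strong modules of $\G$ lying inside $M$ become the intervals, blocks, and strong modules of this smaller inversion graph. The induction hypothesis makes strong modules and strong blocks coincide there, so iterating down the tree proves~(2), and in particular shows every strong module is an interval. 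It remains to establish the sharper statement, and by~\ref{thm:moddecomp} there are three cases.

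In the parallel case the m.s.s.\ are the connected components of $\G$, and in the serial case those of $\comp{\G}$; the crucial point is that \emph{every connected component $C$ of a permutation graph is an interval}. Indeed, were $C$ not an interval, we could pick $i<j<k$ with $i,k\in C$, $j\notin C$, and $C\cap(i,k)=\emptyset$; any path in $C$ from $i$ to $k$ contains an edge $vw$ with $v\le i<j<k\le w$, hence $\pi(v)>\pi(w)$, whereas $v$ and $w$ lie in a different component from $j$, so $vj,wj\notin\G$ and (since $v<j<w$) $\pi(v)<\pi(j)<\pi(w)$ --- a contradiction. Thus components are intervals, hence $\pi$-blocks; a short argument with the direct-sum structure of $\pi$ (a union of components is a $\pi$-block precisely when the components are consecutive, and shifting such a union produces an overlapping block) identifies them with the maximal proper strong $\pi$-blocks. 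The serial case reduces to the parallel one applied to $\wo{n}\pi$, using $\comp{\G}=\G[\wo{n}\pi]$, that $\pi$ and $\wo{n}\pi$ have the same blocks, and that $\G$ and $\comp{\G}$ have the same modules.

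The prime case is where the real work lies. Here $\G$ and $\comp{\G}$ are connected, the m.s.s.\ $M_1,\dots,M_m$ with $m>3$ partition $[n]$, and the quotient of $\G$ on a transversal $U$ is prime, so that among unions of the $M_t$ only a single $M_t$ and $[n]$ are modules. I would show that the position-intervals $[\min M_t,\max M_t]$ are pairwise disjoint; as the $M_t$ partition $[n]$, each $M_t$ then equals its position-interval and so is an interval, after which the identification with the maximal proper strong $\pi$-blocks goes through as before (using~(1), \ref{lemma:L2}, and primeness of the quotient to rule out a $\pi$-block strictly between some $M_t$ and $[n]$). For the disjointness the relevant facts are: two disjoint modules of a permutation graph are adjacent either completely or not at all, and their position-ranges overlap if and only if their value-ranges overlap (an overlap places a value of one strictly between two values of the other). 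Supposing two of the $M_t$ had overlapping position-ranges, one takes a vertex distinguishing them --- which exists because their union is not a module --- and uses it together with the overlap to produce a module of $\G$ that is neither contained in a single m.s.s., nor a union of m.s.s., nor all of $[n]$, contradicting the description of modules in the prime case. Making this construction precise --- in effect, keeping track of which m.s.s.\ occupy the positions between the two overlapping blocks --- is the step I expect to be the main obstacle.
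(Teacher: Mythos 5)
Your part~(1), the interval-module-block equivalence, and the parallel and serial cases of your induction are all correct; the observation that connected components of a permutation graph are intervals (via an edge $vw$ of a path straddling the missing position $j$) is a clean argument, and the reduction of the serial case to the parallel one via $\wo{n}\pi$ is exactly right. Your overall architecture --- induction on $n$ through the three cases of \ref{thm:moddecomp}, showing at each level that the m.s.s.\ are the maximal proper strong blocks --- is also sound in principle, and differs from the paper, which makes no case distinction at all.

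However, the prime case is a genuine gap, and it is the heart of the theorem: a simple permutation is precisely one whose inversion graph is prime, so this is the case where the statement has real content. Your plan is to show the position-ranges $[\min M_s,\max M_s]$ and $[\min M_t,\max M_t]$ of two m.s.s.\ cannot overlap, by combining a vertex that distinguishes $M_s$ from $M_t$ with the overlap to ``produce a module of $\G$ that is neither contained in a single m.s.s.\ nor all of $[n]$.'' But you never say which set that module is, nor why it is a module; the distinguishing vertex witnesses that $M_s\cup M_t$ is \emph{not} a module, which points in the opposite direction of what you need, and the fact that position-overlap forces value-overlap does not by itself manufacture a new module. As written, the contradiction is asserted rather than derived, and you say yourself that making it precise is the main obstacle --- so the proof is not complete. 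For comparison, the paper avoids the case split entirely: given a strong module $M$ that is not an interval, it considers the ``gap set'' of positions lying strictly between elements of $M$ but outside $M$, shows that $M$ together with its gap set is a module (because its image under $\pi$ is again of this gap-closed form), that the gap set itself is a module, that $\pi$ is monotone on the interval components $M_1<\dots<M_l$ of the union, and finally that $M_1\cup M_2$ is a module overlapping $M$ --- contradicting strongness. That argument is uniform over the parallel, serial and prime cases, and some version of it (or of the monotonicity claim behind it) is what your prime case still needs.
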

The first part of this theorem is relatively easy to prove and is mentioned in \cite{young}. 
Its converse fails for trivial reasons: By \ref{lemma:L2}, the non-strong modules of $\G$ are exactly the unions of m.s.s. of parallel or serial strong modules of $\G$. But such a union is not necessarily an interval.
A complete proof of \ref{thm:stronginterval} is included in the appendix.
We call a $\pi$-block parallel, serial or prime if it is a module of this type.

\section{Main results}\label{sec:main}
In this section we prove our main results. Fix a permutation $\pi \in \Sn$.
For $\tau_1,\tau_2 \in \Sn$, we will write $\pi = \tau_1 \decomp \tau_2$ to indicate that the three permutations satisfy \eqref{eq:zerlegung}.
We call $\tau_1 \decomp \tau_2$ an \emph{inv-decomposition} of $\pi$.
If an inv-decomposition of $\pi$ exists, we call $\pi$ \emph{inv-decomposable}.

\subsection{Inversion decomposition}
In this subsection, we describe all possible inv-decompositions of $\pi$.
We start with an elementary observation:
\begin{lemma} \label{lemma:dreieck}
Let $i,j,k \in [n]$ such that $ij, ik \in \G$ and $jk \notin \G$.
Assume that $\pi = \tau_1 \decomp \tau_2$ for $\tau_1,\tau_2 \in \Sn$.
Then $ij, ik$ are both either in $\G[\tau_1]$ or in $\G[\tau_2]$.
\end{lemma}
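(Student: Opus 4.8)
The plan is to argue by contradiction using the combinatorial characterization of inversion sets from \ref{prop:reglabel}. Suppose the three edges $ij$, $ik$ are not both in the same $\G[\tau_s]$; say (up to swapping the names of $\tau_1$ and $\tau_2$) that $ij \in \G[\tau_1]$ and $ik \in \G[\tau_2]$. Since $\T[\tau_1]$ and $\T[\tau_2]$ partition $\T$, the edge $jk$ lies in neither $\G[\tau_1]$ nor $\G[\tau_2]$, because $jk \notin \G$.

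Next I would consider the relative order of $i,j,k$. The statement is about vertices, so I should not assume $i<j<k$; there are (in principle) several orderings to treat, but the two conditions in \ref{prop:reglabel} are symmetric enough that only the position of the ``apex'' vertex $i$ relative to $j$ and $k$ matters, together with whether $jk$ is or is not an inversion. The key move: look at the triple $\{i,j,k\}$ inside $\tau_1$. We have one of the two edges incident to $i$ present in $\G[\tau_1]$ (namely $ij$) but the other two edges ($ik$ and $jk$) absent. Depending on which of the three is the ``middle'' element of the triple, one of the two implications of \ref{prop:reglabel}(2) — applied to $\tau_1$ — is violated: if $j$ or $k$ is the middle element, the transitivity-type condition (``$ij,jk \in T \Rightarrow ik \in T$'' read in the appropriate direction, or ``$ik \in T \Rightarrow ij \in T$ or $jk \in T$'') forces a contradiction once we plug in which single edge is present. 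I would simply enumerate: whichever of $i,j,k$ is smallest and whichever is largest, the pattern ``exactly one edge of the triangle present, and it is an edge at vertex $i$'' is incompatible with $\T[\tau_1]$ being a valid inversion set unless that one present edge is the one joining the smallest to the largest vertex — but the same analysis applied to $\tau_2$, where instead $ik$ is the unique present edge, then also pins down that $ik$ must join the extreme vertices, and $ij$ and $ik$ cannot both be ``the edge between the min and the max'' of $\{i,j,k\}$. That contradiction finishes the proof.

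The main obstacle I anticipate is purely bookkeeping: the lemma is phrased with $i$ singled out as the common endpoint, so I must be careful that $i$ need not be between $j$ and $k$, and to present the ordering cases cleanly rather than as a messy six-way split. A cleaner packaging: let $a<b<c$ be $\{i,j,k\}$ sorted; \ref{prop:reglabel}(2) says that among the three edges $ab,bc,ac$ of any valid inversion set restricted to this triple, the possibilities are exactly $\emptyset$, $\{ab\}$, $\{bc\}$, $\{ab,ac\}$, $\{bc,ac\}$, $\{ac\}$, $\{ab,bc,ac\}$ — i.e. $ac$ is present whenever $ab$ or $bc$ is, and $ac$ present forces $ab$ or $bc$. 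Since in $\G$ itself the triple carries exactly $\{ij,ik\}$, the pair $\{j,k\}$ is not $\{a,c\}$ (else $jk=ac$ would be forced present), so $i \in \{a,c\}$, i.e. $i$ is the smallest or largest of the three; and $ij,ik$ are then $\{ab,ac\}$ or $\{bc,ac\}$ accordingly — in either case $ac$ is one of our two edges. Now in the partition, $ac$ goes to one part, say $\G[\tau_1]$, and the other edge (an edge at the extreme vertex $i$, hence of the form $ab$ or $bc$) goes to $\G[\tau_2]$; but then $\T[\tau_2]\cap\binom{\{a,b,c\}}{2} = \{ab\}$ or $\{bc\}$ with $ac$ absent — which is allowed — so this alone is not yet a contradiction, and I must instead feed $ac \in \T[\tau_1]$ into the second implication for $\tau_1$: it forces $ab \in \T[\tau_1]$ or $bc \in \T[\tau_1]$, but the only edges of this triple available to $\tau_1$ total (within $\G$) are $ij$ and $ik$, and we placed the non-$ac$ one in $\tau_2$; hence $\T[\tau_1]$ contains $ac$ but neither $ab$ nor $bc$, violating \ref{prop:reglabel}(2) for $\tau_1$. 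That is the contradiction, and it is the one genuine step; everything else is the ordering reduction just described.
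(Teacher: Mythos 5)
Your argument is correct and is essentially the paper's own proof: the case where $i$ lies between $j$ and $k$ is excluded by the transitivity condition of \ref{prop:reglabel} applied to $\pi$ itself, and in the remaining cases the second condition of \ref{prop:reglabel}, applied to whichever $\tau_s$ receives the ``long'' edge $ac$, forces the other edge into the same part. One cosmetic slip: in your parenthetical enumeration of admissible edge sets on a sorted triple $a<b<c$, the set $\set{ac}$ should not appear (it violates exactly the condition you invoke at the end, and the correct list has the six sets corresponding to the six orderings of three values), but this does not affect your argument, which correctly uses that condition to obtain the contradiction.
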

\begin{proof}
We consider the different relative orders of $i,j$ and $k$ separately, but we may assume $j < k$.
\begin{description}
	\item[$i<j<k$] The edge $ik$ is contained either in $\T[\tau_1]$ or in $\T[\tau_2]$, say in $\T[\tau_1]$. By assumption $jk \notin \T[\tau_1]$, therefore by \ref{prop:reglabel} we have $ij \in \T[\tau_1]$. 
	\item[$j<i<k$] This case is excluded by \ref{prop:reglabel}.
	\item[$j<k<i$] Analogous to the first case.
\end{description}
\end{proof}

Note that there is no assumption on the relative order of $i,j$ and $k$, so this is really a statement about the inversion graph of $\pi$. \ref{lemma:dreieck} gives rise to a partition of the edges of $\G$: Two edges $ij$, $ik \in \G$ with a common endpoint are in the same \emph{edge class} if $jk \notin \G$, and our partition is the transitive closure of this relation. Thus by \ref{lemma:dreieck} two edges in the same class always stay together when we distribute the inversions of $\pi$ on $\tau_1$ and $\tau_2$.
%
In \cite{gallai1967transitiv} edge classes are considered for a different motivation.
In that paper the following description is given\footnote{Note that what we call module is called `geschlossene Menge' (closed set) in \cite{gallai1967transitiv}.}. 

\begin{proposition}[\cite{gallai1967transitiv}] \label{thm:edgeclass}
Let $G=(V,E)$ be a graph with at least two vertices. Then there are two kinds of edge classes:
\begin{enumerate}
	\item For two m.s.s. $M_1, M_2 \subset M$ of a serial module $M$, the set $M_1 M_2$ is an edge class.
	\item The set of external edges of a prime module forms an edge class.
\end{enumerate}
Every edge class is of one of the above types.
\end{proposition}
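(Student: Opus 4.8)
The plan is to read off the edge classes from the modular decomposition tree of $G$, using Theorem~\ref{thm:moddecomp}. As noted above, every edge of $G$ is an external edge of exactly one strong module, and a parallel module has no external edges; grouping the external edges of each serial module $M$ according to which pair of m.s.s.\ of $M$ they join, we see that the sets appearing in (1) (the sets $M_1M_2$) and (2) (the external edges of prime modules) partition $E(G)$. It then remains to show that this partition coincides with the partition into edge classes, which I would establish by the two inclusions.

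For the inclusion ``every edge class lies in one of these sets'', it suffices to show that the one-step relation $ij\sim ik$ (meaning $ij,ik\in G$ and $jk\notin G$) never relates two edges from different sets. Given such a triple $i,j,k$, let $M$ be the smallest strong module containing $\{i,j,k\}$. Not all of $i,j,k$ can lie in one m.s.s.\ of $M$, as that m.s.s.\ would be a smaller strong module. If exactly two of them lie in some m.s.s.\ $N$ and the third outside, then the module property of $N$ forces the third vertex to be adjacent to both or to neither of the other two, which together with $ij,ik\in G$ and $jk\notin G$ is only possible when the two lying in $N$ are $j$ and $k$; then $ij$ and $ik$ both belong to $M_1M_2$ with $M_1=N$ and $M_2$ the m.s.s.\ containing $i$. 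Otherwise $i,j,k$ lie in three distinct m.s.s., and $jk\notin G$ forces $M$ to be prime. In all surviving cases $i$ and $j$, and also $i$ and $k$, lie in distinct m.s.s.\ of $M$, so $ij$ and $ik$ are external edges of $M$; and since the smallest strong module containing $i$ and $j$, or $i$ and $k$, is $M$ itself, these two edges belong to the same set --- the external edges of the prime module $M$, or the same $M_1M_2$ when $M$ is serial. Passing to the transitive closure proves the inclusion.

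For the reverse inclusion I must show each such set is contained in a single edge class. For a serial module $M$ with m.s.s.\ $M_1,M_2$, all edges of $M_1M_2$ are present, and any two of them, $xy$ and $x'y'$, are linked by one-step relations: move $x$ to $x'$ along a path in $\overline{G_{M_1}}$, then $y$ to $y'$ along a path in $\overline{G_{M_2}}$, observing that consecutive edges share a vertex while the third pair is a non-edge. The complements $\overline{G_{M_i}}$ are connected because an m.s.s.\ of a serial module is a connected component of $\overline{G_M}$. For a prime module $M$ I would pass to the prime quotient $G_U$ from Theorem~\ref{thm:moddecomp}, with representatives $u_a\in M_a\cap U$: the one-step relations among the representative edges $u_au_b$ mirror precisely those in $G_U$, and every external edge $xy$ with $x\in M_a$, $y\in M_b$ can be linked to $u_au_b$ by a similar ``path in a complement'' argument --- this time possibly requiring a detour through a third m.s.s., since an m.s.s.\ of a prime module may itself be serial; such a detour is available because $G_U$ is co-connected. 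This reduces the prime case to the statement that a prime graph on at least four vertices has a single edge class.

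That statement is the heart of the matter, and is where Gallai's argument in \cite{gallai1967transitiv} does the real work. The way I would see it is via the following general lemma: for any graph and any edge class $C$, the vertex set $V(C)$ is a module. Indeed, the one-step relation shows that if $xy\in C$ and a vertex $r$ is adjacent to exactly one of $x,y$, then the edge from $r$ to that one also lies in $C$, hence $r\in V(C)$; so for $r\notin V(C)$ every edge $xy\in C$ satisfies $xr\in G\Leftrightarrow yr\in G$, and since the edges of $C$ form a connected graph on $V(C)$, this propagates along paths to give $pr\in G\Leftrightarrow qr\in G$ for all $p,q\in V(C)$. In a prime graph this forces $V(C)=V$ for every edge class $C$; a further short argument --- the step I expect to be the most delicate, and for which I would follow \cite{gallai1967transitiv} --- then rules out there being more than one such class, finishing the prime case and hence the proof.
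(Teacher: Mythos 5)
First, a point of comparison: the paper does not prove this proposition at all --- it is imported from \cite{gallai1967transitiv} as a cited result --- so there is no in-paper argument to measure yours against. Judged on its own terms, your proposal gets the framework and the easy parts right: the candidate sets do partition $E(G)$ (every edge is external to exactly one strong module, which cannot be parallel); your analysis of the one-step relation via the smallest strong module containing $i,j,k$ correctly shows that no edge class crosses between two candidate sets; the serial case of the converse is correct (a path in $\overline{G_{M_1}}$ works, and $\overline{G_{M_1}}$ is connected because $M_1$ is a connected component of $\overline{G_M}$); and your lemma that the vertex set $V(C)$ of an edge class is a module is correct and is indeed a standard ingredient.

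The gap is real, however, and it sits exactly at the crux. (i) The assertion that a prime graph on at least four vertices has a single edge class is the hard content of Gallai's theorem. Your module lemma only yields $V(C)=V$ for every class $C$ of a prime graph; it does not exclude several classes each spanning all of $V$, and ruling that out requires a genuine further argument. Writing that you ``would follow \cite{gallai1967transitiv}'' here leaves the proof incomplete at precisely the point where all the work is. (ii) The lifting step for a prime strong module $M$ --- connecting an arbitrary external edge $xy$, $x\in M_a$, $y\in M_b$, to the representative edge $u_au_b$ --- is not justified by co-connectedness of $G_U$. When $M_a$ is serial one must relate $xy$ to $x'y$ with $xx'\in G$, which needs a vertex $w\notin M_a$ with $wy\in G$ and $w$ non-adjacent to $M_a$; such a $w$ need not exist for the pair $(a,b)$ itself. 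For instance, if $G_U$ is the path $u_1u_2u_3u_4$ and $y\in M_1$, $x,x'\in M_2$, the only m.s.s.\ adjacent to $M_1$ is $M_2$ itself, so one must first slide the edge to $xz$, $z\in M_3$ (legitimate since $u_1u_3\notin G_U$), and only then detour through $M_4$. Making this work in general requires the single-edge-class property of $G_U$ as input together with a careful induction along one-step relations in $G_U$, not merely connectivity of $\overline{G_U}$. In short: correct skeleton and correct easy cases, but the heart of the proposition remains unproven in your write-up.
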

Edge classes are also considered in \cite[Chapter 5]{golumbic} under the name `colour classes' and in \cite{young} as the connected components of a certain graph $\Gamma_\pi$. Theorem $1$ in the latter reference gives a different characterization of edge classes. 
Now we can state our main result.
We give a description of all ways of partitioning $\T$ into two sets satisfying \eqref{eq:zerlegung}.
\begin{theorem} \label{thm:klass}
	Consider a partition $\T = T_1 \dcup T_2$ of the inversion set of $\pi$ into nonempty subsets $T_1, T_2 \subset \T$. For such a partition, the following conditions are equivalent:
	\begin{enumerate}
		\item There exist permutations $\tau_1, \tau_2 \in \Sn$ such that $T_i = \T[\tau_i]$ for $i=1,2$. In particular, $\pi = \tau_1 \decomp \tau_2$.
		\item For every strong prime module of $\G$, all its external edges are either in $T_1$ or in $T_2$.
		For every strong serial module of $\G$ with $p$ maximal strong submodules $M_1 < \ldots < M_p$ there exists a permutation $\sigma \in S_p$, such that for each pair $1\leq i<j\leq p$ it holds that $M_iM_j \subset T_1$ if and only if $ij \in \T[\sigma]$.
	\end{enumerate}
\end{theorem}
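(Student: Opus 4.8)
The statement is an equivalence between a property of a partition $\T = T_1 \dcup T_2$ and a structural description in terms of the modular decomposition tree of $\G$. I would prove $(1) \Rightarrow (2)$ first, then $(2) \Rightarrow (1)$, the latter being the substantial direction.

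For $(1) \Rightarrow (2)$: Suppose $T_i = \T[\tau_i]$. By \ref{lemma:dreieck}, every edge class of $\G$ lies entirely in $T_1$ or entirely in $T_2$. By \ref{thm:edgeclass}, the external edges of a strong prime module form a single edge class, which immediately gives the first assertion. For a strong serial module $M$ with m.s.s. $M_1 < \dots < M_p$, each set $M_iM_j$ is an edge class, so it lies entirely in $T_1$ or in $T_2$; define a relation on $[p]$ by declaring $\{i,j\}$ an ``inversion'' exactly when $M_iM_j \subset T_1$. The remaining work is to check this relation is the inversion set of some $\sigma \in S_p$, which I would do via the Yanagimoto criterion \ref{prop:reglabel}: for $i < j < k$, the three edge-sets $M_iM_j, M_jM_k, M_iM_k$ are governed by the restriction of $\tau_1$ (equivalently of $\pi$) to the interval $\bigcup_{\ell} M_\ell$, and since $M$ is serial the connectivity between these m.s.s.\ as seen by $\tau_1$ is itself an inversion-set pattern on $p$ points, so the two implications of \ref{prop:reglabel} hold; the key point is that $M$ serial means the external edges of $M$ in $\tau_1$ restricted to $M$ again satisfy the permutation-graph axioms. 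Here I would invoke \ref{thm:stronginterval} to identify $M$ with an interval and argue with the actual permutation values.

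For $(2) \Rightarrow (1)$: This is a reconstruction. Given the partition satisfying $(2)$, I want to build $\tau_1$ (and then $\tau_2$ is forced, with $\T[\tau_2] = \T \setminus \T[\tau_1]$; one checks $\tau_2$ is well-defined by \ref{prop:reglabel} applied to the complement within $\T$, using the remark after \ref{prop:reglabel}). To build $\tau_1$, I would work recursively down the modular decomposition tree of $\G$. At a parallel strong module there are no external edges, so nothing to decide there. At a prime strong module $M$, by hypothesis all its external edges go to one side, say $T_1$; so in $\tau_1$ we keep the prime ``skeleton'' exactly as in $\pi$, and in $\tau_2$ we flatten it (make $M$ connected-as-in-identity at that level). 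At a serial strong module $M$ with m.s.s.\ $M_1 < \dots < M_p$ and the given $\sigma \in S_p$: we reorder the blocks $M_1, \dots, M_p$ according to $\sigma$ for $\tau_1$ (and according to the complementary permutation $\wo{p}\sigma$ for $\tau_2$), which realizes exactly the prescribed external edges $M_iM_j \subset T_1 \iff ij \in \T[\sigma]$. Recursing into each m.s.s., we get the internal edges distributed consistently. The permutation $\tau_1$ is then assembled by substituting, at each serial node, the reordered blocks, and at each prime or parallel node keeping the order; I would make this precise using the substitution/inflation operation on permutations and \ref{thm:stronginterval} to know the strong modules are genuine intervals so the substitution makes sense as an element of $S_n$. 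Finally one verifies $\T[\tau_1] = T_1$ by checking edge-by-edge: every edge of $\G$ is an external edge of exactly one strong module (as noted after \ref{lemma:L2}), and we have arranged that edge to lie on the correct side.

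**Main obstacle.** The hard part is the $(2) \Rightarrow (1)$ reconstruction: making the recursive assembly of $\tau_1$ rigorous as an actual permutation in $\Sn$ — i.e.\ checking that inflating the modular decomposition tree with locally-chosen reorderings at the serial nodes yields a well-defined permutation whose inversion graph has exactly the external-edge distribution we prescribed at every node — and then confirming $\tau_2$ is simultaneously well-defined. The subtlety is bookkeeping the interaction between a serial node's reordering and the internal structure of its children, and ensuring no edge is double-counted or mis-assigned; \ref{thm:stronginterval} (strong modules are intervals) and the fact that each edge is external for a unique strong module are the two facts that make the bookkeeping close up.
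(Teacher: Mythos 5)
Your $(1)\Rightarrow(2)$ is essentially the paper's argument: edge classes are monochromatic by \ref{lemma:dreieck}, \ref{thm:edgeclass} classifies them, and $\sigma$ is obtained from the pattern of $\tau_1$ on one representative $a_i\in M_i$ per maximal strong submodule (the paper simply defines $\sigma(i)$ as the rank of $\tau_1(a_i)$ among the $\tau_1(a_1),\ldots,\tau_1(a_p)$, which is slightly more direct than re-verifying \ref{prop:reglabel}, but both work). For $(2)\Rightarrow(1)$ you take a genuinely different route. The paper never constructs $\tau_1$ explicitly: it verifies the two conditions of \ref{prop:reglabel} for the set $T_1$ directly. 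Given $i<j<k$, it takes the smallest strong module $M$ containing all three, notes that $i$ and $k$ lie in different m.s.s.\ of $M$ because strong modules are intervals (\ref{thm:stronginterval}), and then either two of the three points share an m.s.s.\ --- so two of the three pairs lie in a single edge class $M_aM_b$ and the criterion is automatic --- or all three pairs are external edges of $M$, where the hypothesis ($M$ prime: all external edges on one side; $M$ serial: sides governed by the permutation $\sigma$) yields the criterion at once. This is about ten lines and needs no inflation machinery. Your reconstruction by inflating the modular decomposition tree is also correct --- it is in substance the paper's later \ref{thm:invdecomp}, which the paper instead derives as a corollary of \ref{thm:klass} --- but it obliges you to prove the auxiliary fact that the inversion set of an inflation $\alpha[\sigma_1,\ldots,\sigma_m]$ consists of the internal inversions of the $\sigma_i$ together with the sets $I_iI_j$ for $ij\in\T[\alpha]$, and then to induct up the tree using the fact that each edge is external for exactly one strong module; that is routine but it is the real work, and you correctly flag it as the obstacle. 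One small correction: $\tau_2$ is not obtained from the remark after \ref{prop:reglabel}, which concerns the complement in $\binom{[n]}{2}$ rather than in $\T$; it comes from the symmetry of condition $(2)$ in $T_1$ and $T_2$ (swap keep/flatten at prime nodes and replace $\sigma$ by $\wo{p}\sigma$ at serial nodes), which is exactly what your explicit recipe already does, so nothing is actually broken.
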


\begin{proof}
\emph{(1) $\Rightarrow$ (2):}
Every edge of $\G$ is an external edge of a module $M$ that is either prime or serial.
If $M$ is a prime module, then its external edges form an edge class, hence they all are in $T_1$ or $T_2$. If $M$ is a serial module with m.s.s. $M_1, \ldots, M_p$, then the sets $M_i M_j$ are edge classes. For every $M_i$, choose a representative $a_i \in M_i$. We construct a permutation $\sigma \in S_p$ as follows: Order the images $\tau_1(a_i), i=1,\ldots,p$ in the natural order. Then $\sigma(i)$ is the position of $\tau_1(a_i)$ in this order. Thus, for $i < j$ we have 
\begin{align*}
	M_i M_j \subset \T[\tau_1] &\Longleftrightarrow \tau_1(a_i) > \tau_1(a_j) \\
	&\Longleftrightarrow \sigma(i) > \sigma(j) \\
	&\Longleftrightarrow ij \in \T[\sigma] \\
\end{align*}

\emph{(2) $\Rightarrow$ (1):}
By symmetry, we only need to show the existence of $\tau_1$. 
For this, we verify conditions of \ref{prop:reglabel}.
This is a condition for every three numbers $1 \leq i < j < k \leq n$, so let us fix them. Note that our hypothesis on $T_1$ and $T_2$ implies that every edge class of $\T$ is contained either in $T_1$ or $T_2$.

Let $M$ be the smallest strong module containing these three numbers. It holds that $i$ and $k$ are in different m.s.s. of $M$, because every strong module containing both would also contain $j$, since it is an interval by \ref{thm:stronginterval}. Now we distinguish two cases: Either, $i$ and $j$ are in the same m.s.s. of $M$, or all three numbers are in different m.s.s..

In the first case, let $i, j \in M_a$ and $k \in M_b$. Then $ik, jk \in M_a M_b$ belong to the same edge class, so either both or neither of them are in $T_1$. This is sufficient to prove that the criterion is satisfied. 

In the second case, the edges $ij, jk, ik$ are all external to $M$.
Hence, if $M$ is prime, either none of them is in $T_1$ or all that are also in $\T$. Since $\T$ is the inversion set of a permutation, the criterion of \ref{prop:reglabel} is clearly satisfied in this case.
If $M$ is serial, then the edges correspond to inversions of $\sigma$: Let $i \in M_a, j \in M_b, k \in M_c$, then $M_a M_b \subset T_1$ if and only if $ab \in \T[\sigma]$ and similarly for the other edges.
Since $\sigma$ is a permutation, the criterion is again satisfied.
\end{proof}

As a corollary, we can count the number of inv-decompositions of $\pi$:
\begin{corollary}\label{cor:anzahl}
Let $m$ be the number of strong prime modules and let $k_i$ be the number of strong serial modules with $i$ maximal strong submodules, $2\leq i\leq n$. The number of inv-decompositions of $\pi$ is
\[ \frac{1}{2}2^m \prod\limits_{i=2}^{n}(i!)^{k_i} - 1 \label{eq:anzahl} \]
In particular, the number of inv-decompositions depends only on the inversion graph $\G$.
\end{corollary}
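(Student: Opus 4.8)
The plan is to read the count off directly from \ref{thm:klass}.

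First, recall that a permutation is uniquely determined by its inversion set, so the map $\sigma \mapsto \T[\sigma]$ is injective on any symmetric group. Hence, if $\T = T_1 \dcup T_2$ is a partition as in \ref{thm:klass}, the permutations $\tau_1,\tau_2$ with $\T[\tau_i]=T_i$ are uniquely determined; and since both the conditions \eqref{eq:zerlegung} and the notation $\tau_1 \decomp \tau_2$ are symmetric in the two factors, an inv-decomposition of $\pi$ is precisely the datum of an unordered partition $\set{T_1,T_2}$ of $\T$ into two nonempty parts satisfying condition (2) of \ref{thm:klass}. (We may assume $\pi \neq \id{n}$; if $\pi = \id{n}$ then $\T = \emptyset$ and there is nothing to decompose.) So it suffices to count these unordered partitions.

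I would first count the corresponding \emph{ordered} pairs $(T_1,T_2)$, allowing either part to be empty. As noted in \ref{sec:prelim}, every edge of $\G$ is an external edge of exactly one strong module, and a parallel strong module (in particular a single vertex) has no external edges; hence $\T$ is the disjoint union, over the strong prime and strong serial modules $M$ of $\G$, of the external-edge sets of the $M$. By condition (2) of \ref{thm:klass}, a valid ordered pair is the same as a choice, made independently for each such $M$, of: one of two options if $M$ is prime (all external edges of $M$ in $T_1$, or all in $T_2$); and a permutation $\sigma \in S_p$ if $M$ is serial with $p$ maximal strong submodules. Since these choices act on disjoint, exhaustive pieces of $\T$, they are genuinely independent, which would give the count $N := 2^m\prod_{i=2}^{n}(i!)^{k_i}$ for the valid ordered pairs.

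The step I expect to require the most care is checking that distinct systems of choices produce \emph{distinct} ordered pairs $(T_1,T_2)$. For a prime $M$, the two options differ on the external-edge set of $M$, which is nonempty because otherwise the induced subgraph of $\G$ on $M$ would be disconnected, contradicting the prime case of \ref{thm:moddecomp}. For a serial $M$ with maximal strong submodules $M_1 < \dots < M_p$, every edge class $M_iM_j$ with $i \neq j$ is nonempty since a serial module has all external edges between its m.s.s.; and if $\sigma \neq \sigma'$ in $S_p$ then $\T[\sigma] \neq \T[\sigma']$ by the injectivity recalled above, so $\sigma$ and $\sigma'$ assign some $M_iM_j$ to different parts. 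Together with the independence over disjoint pieces of $\T$, this yields the required injectivity and hence the count $N$.

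Finally, a valid ordered pair with $T_1 = \emptyset$ forces all external edges of every prime module into $T_2$ and forces the identity for every serial module's $\sigma$, so there is exactly one such pair; likewise exactly one with $T_2 = \emptyset$, and the two are distinct because $\T \neq \emptyset$. Hence the number of ordered pairs with both parts nonempty is $N-2$, and passing to unordered pairs by dividing by $2$ gives $\tfrac12 N - 1 = \tfrac12\,2^m\prod_{i=2}^{n}(i!)^{k_i} - 1$, as claimed. The last sentence of the corollary is then immediate, since $m$ and the numbers $k_i$ are determined by the modular decomposition of $\G$ alone.
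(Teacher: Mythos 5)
Your proof is correct and follows the same route the paper intends: reading the count of valid partitions directly off \ref{thm:klass} as independent choices per strong prime/serial module, then adjusting for the two trivial ordered pairs and the unordered identification. You supply details (nonemptiness of the edge classes, injectivity of $\sigma\mapsto\T[\sigma]$, the disjoint decomposition of $\T$ into external-edge sets) that the paper leaves as a brief remark, but the argument is the same.
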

We exclude the trivial inv-decomposition $\pi = \pi \decomp \id{n}$, therefore the "$-1$" in above formula. The factor $\frac{1}{2}$ is there because we identify $\tau_1 \decomp \tau_2 = \tau_2 \decomp \tau_1$.

\subsection{Multiplicative decompositions}
A notable special case of an inv-decom\-po\-si\-tion is the following: 
\begin{definition}
We call an inv-decomposition $\pi = \tau_1 \decomp \tau_2$ \emph{multiplicative} if $\pi = \tau_1 \tau_2$ or $\pi = \tau_2 \tau_1$ (multiplication as permutations).
\end{definition}
This kind of inv-decomposition is surprisingly common.
In this subsection, we prove the following 
\begin{theorem}\label{thm:mult}
\begin{enumerate}
\item Every inv-decomposable permutation has a multiplicative inv-decomposition.
\item If a permutation $\pi$ has a non-multiplicative inv-decomposition and $\G$ is connected, then $\pi$ has a decreasing subsequence of size $4$.
\end{enumerate}
\end{theorem}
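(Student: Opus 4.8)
The plan is to run everything through the modular decomposition tree of $\G$, using \ref{thm:stronginterval}: a strong module of $\G$ is an interval of positions whose image under $\pi$ is again an interval, so ``localizing'' a permutation to a strong module makes sense, and the external edges of a strong module $M$ are precisely the edges of $\G$ joining two distinct maximal strong submodules (m.s.s.) of $M$.

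\textbf{Part (1).} Let $\pi$ be inv-decomposable; by \ref{cor:anzahl}, $\G$ has a strong module that is prime or serial, and we fix one, $M$, minimal among these. By \ref{prop:untermod} every strong submodule of $M$ is a strong module of $\G$, so minimality forces each m.s.s. of $M$, and recursively each of its strong submodules, to be parallel; hence $\G$ has no edge inside any m.s.s. of $M$. Write $M_1<\dots<M_q$ for the m.s.s. of $M$, set $a_j\defa\#M_j$, and let $\hat\sigma\in S_q$ be the skeleton of $M$: a simple permutation if $M$ is prime (so $q\ge 4$), and $\hat\sigma=\wo{q}$ if $M$ is serial (so $q\ge 2$). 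Since the m.s.s. are edgeless, the external edges of $M$ are exactly the edges of $\G$ lying inside $M$. Let $\tau_1\in\Sn$ be the identity outside the interval $M$ and, inside $M$, carry the increasingly-ordered block $M_j$ onto the $\hat\sigma(j)$-th value-block of $M$ from the bottom; then $\T[\tau_1]$ is precisely the set of external edges of $M$, and $\tau_1\ne\id{n}$.

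Now distinguish two cases. If $\T$ is \emph{not} the set of external edges of $M$, put $T_1\defa\T[\tau_1]$, $T_2\defa\T\setminus T_1$; this is an inv-decomposition by \ref{thm:klass}, and I claim it is multiplicative with $\pi=\tau_2\tau_1$, $\tau_2\defa\pi\tau_1^{-1}$. Indeed $\tau_2$ agrees with $\pi$ outside $M$, while inside $M$ its pattern is that of $\pi|_M$ composed with the inverse of that of $\tau_1|_M$; by minimality of $M$ both of these are the inflation of $\hat\sigma$ by identity blocks of sizes $a_1,\dots,a_q$, so they cancel and $\tau_2|_M$ has the identity pattern. As $M$ is a $\pi$-block, $\pi(M)$ is an interval and the edges between $M$ and its complement are the same for $\tau_2$ as for $\pi$, so $\T[\tau_2]=\T\setminus\{\text{external edges of }M\}=T_2$; and $\tau_2\ne\id{n}$ by hypothesis. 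If instead $\T$ \emph{is} the set of external edges of $M$, then $\G$ is the disjoint union of the induced subgraph on $M$ and isolated vertices, so the only prime or serial strong module of $\pi$ is $M$ itself; were $M$ prime we would have $m=1$ and all $k_i=0$, contradicting inv-decomposability via \ref{cor:anzahl}. Hence $M$ is serial, $\pi$ is, up to fixed points, the skew sum $\id{a_1}\ominus\dots\ominus\id{a_q}$, and inv-decomposability forces $q\ge 3$. Put $I_1\defa M_1$, $I_2\defa M_2\cup\dots\cup M_q$: these are intervals whose union $I$ is a $\pi$-block, every pair of $I_1\times I_2$ is an inversion, and $\pi|_I$ is the skew sum of $\pi|_{I_1}$ and $\pi|_{I_2}$. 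Let $\tau_1$ be the identity outside $I$ carrying, inside $I$, the block $I_1$ onto the top $a_1$ values of $I$ and $I_2$ onto the bottom $a_2+\dots+a_q$; a direct computation with these two-block skew patterns shows that $\pi\tau_1^{-1}$ restricts on $I$ to the \emph{direct} sum of $\pi|_{I_2}$ and $\pi|_{I_1}$, losing exactly the inversions between $I_1$ and $I_2$. So $\tau_2\defa\pi\tau_1^{-1}$ gives $\pi=\tau_2\tau_1$ and $\T=\T[\tau_1]\dcup\T[\tau_2]$, with $\tau_1\ne\id{n}$ and $\tau_2\ne\id{n}$ (as $M_2M_3\subseteq\T[\tau_2]$, using $q\ge 3$). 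The only delicate point is the bookkeeping with block sizes: the cancellation of the two inflations of $\hat\sigma$ works only because $M$ is minimal (inner blocks are identities), and the second construction is deliberately a skew of just two parts, where unequal sizes are harmless.

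\textbf{Part (2).} A decreasing subsequence of length $4$ in $\pi$ is the same as a clique $K_4$ in $\G$, so it suffices to prove: if $\G$ is connected and $K_4$-free, then every inv-decomposition of $\pi$ is multiplicative. Fix one, described via \ref{thm:klass} by a side for each strong prime module and a $\sigma_M\in S_q$ for each strong serial module with m.s.s. $M_1<\dots<M_q$. No serial module can have $q\ge 4$ m.s.s., since one vertex from each of four m.s.s. of a serial module spans a $K_4$. For a serial module with $q\le 3$ m.s.s., whatever $\sigma_M$ is, the induced partition of its external edges coincides with the one coming from a two-part skew decomposition $I=I_1\ominus I_2$ of a sub-$\pi$-block $I$ (for $q=2$ take $I=M$; for $q=3$ every nontrivial $\sigma_M\in S_3$ splits the three edge classes of $M$ as ``one versus two'' or ``two versus one'', each realized by a two-part skew split of $M$ or of a union of two of its m.s.s.); peeling these two-part skew splits off one at a time, as in Part (1), realizes the corresponding factors of the decomposition multiplicatively. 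The step I expect to be the real obstacle is the prime-module splits: for a strong prime module $M$ one must show that the permutations determined by ``all external edges of $M$ on one side, everything else as in $\pi$'' factor as $\tau_1\tau_2$ or $\tau_2\tau_1$ with the right inversion sets. When $M$ is minimal this is exactly the first construction of Part (1); when $M$ is not minimal some m.s.s.\ $M_j$ carries an edge, and one shows that $\G$ then already contains a $K_4$: the skeleton of $M$ is prime, hence connected, so $M_j$ is joined completely to another m.s.s.\ $M_k$, and a short case analysis on the possible shapes of this ($K_4$-free, prime) skeleton produces a $K_4$ from an edge inside $M_j$, a vertex of $M_k$, and either a second edge among $M_j\cup M_k$ or a triangle of the skeleton through $M_j$ — contradicting $K_4$-freeness and closing the argument.
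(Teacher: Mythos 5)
Your Part (1) is correct in its first case but breaks in the second. Take $\pi=321$: here $M=[3]$ is serial with $q=3$ singleton m.s.s.\ and $\T$ is exactly the set of external edges of $M$, so you are in your second case with $I_1=\set{1}$, $I_2=\set{2,3}$. Your construction gives $\tau_1=312$ and $\tau_2\defa\pi\tau_1^{-1}=213$, whence $\T[\tau_1]=\set{12,13}$ and $\T[\tau_2]=\set{12}$: the union misses $23$ and the intersection is nonempty, so this is not an inv-decomposition at all. The failure is structural, not a matter of block sizes: $\tau_2|_M=\pi|_M\circ(\tau_1|_M)^{-1}$ has pattern $(\id{a_2}\ominus\cdots\ominus\id{a_q})\oplus\id{a_1}$, so its inversions occupy the \emph{first} $\#I_2$ positions of $M$, whereas $T_2=\bigcup_{2\le i<j\le q}M_iM_j$ lives on the \emph{last} $\#I_2$ positions; $\T[\tau_2]$ comes out with the right cardinality but shifted by $a_1$. (Your first case survives precisely because there $\tau_2|_M$ is order-preserving, so nothing gets shifted.) The repair is to load the one-sided block onto the right-hand factor: let $\tau_2$ be the identity outside the interval $I_2$ and realize the pattern of $\pi|_{I_2}$ on $I_2$, then set $\tau_1\defa\pi\tau_2^{-1}$. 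Deciding on which side such a factor may sit is exactly the content of the paper's \ref{prop:multlemma1}.

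Part (2) has two genuine gaps. First, your claim that a non-minimal strong prime module forces a $K_4$ is false: for $\pi=25413=2413[1,21,1,1]$ the graph $\G$ is connected and $K_4$-free, yet $[5]$ is a prime strong module whose m.s.s.\ $\set{2,3}$ carries an edge; its skeleton is a path on four vertices, which is prime, triangle-free, and adjacent only to edgeless m.s.s., so your case analysis produces no $K_4$ (and indeed there is none). Second, even where each local move is sound, ``peeling off'' one module at a time yields a product of many factors, while a general inv-decomposition makes independent choices at several strong modules simultaneously; you never show that these factors regroup into exactly two permutations whose inversion sets are the prescribed $T_1$ and $T_2$. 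The paper sidesteps both issues: by \ref{prop:multlemma1} a decomposition can only fail to be multiplicative if some component of $\G[\tau_1]$ and some component of $\G[\tau_2]$ are not induced subgraphs of $\G$, and a $K_4$ is then extracted directly from a shortest path inside such a component together with \ref{lemma:dreieck}, using a shortest $1$--$n$ path to guarantee that $\G[\tau_1]$ has no isolated vertices.
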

The assumption that $\G$ is connected is needed to avoid a rather trivial case. $\G$ is disconnected if and only if $\pi$ maps a lower interval $[k] \subset [n]$ to itself. So in this case, $\pi$ is the product of a permutation $\pi_1$ on $[k]$ and a permutation $\pi_2$ on $\set{k+1,\ldots,n}$.
If we have multiplicative inv-decompositions $\pi_1 = \tau_{11}\tau_{12}$ and $\pi_2 = \tau_{21}\tau_{22}$, then $\pi = \tau_{11}\tau_{22} \decomp \tau_{12}\tau_{12}$ is in general not multiplicative.
Before we prove \ref{thm:mult}, we prepare two lemmata.
\begin{lemma}\label{lemma:parab}
	If $C \subset [n]$ is the set of vertices of a connected component of $\G$, then $\pi(C) = C$. 
\end{lemma}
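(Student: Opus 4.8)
The plan is to prove this in two steps: first show that a connected component $C$ of $\G$ is an interval of $[n]$, and then use the fact that $\G$ has no edges between $C$ and $[n]\setminus C$ to determine the set of values $\pi(C)$.

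For the first step I would note that a connected component $C$ of any graph is a module (a vertex outside $C$ is non-adjacent to every vertex of $C$), and in fact a \emph{strong} module: if $N$ were a module with $N\cap C\neq\emptyset$ and $C\not\subseteq N$, then taking a path inside $C$ from a vertex of $N\cap C$ to a vertex of $C\setminus N$ and looking at its first edge leaving $N$ produces an edge between $C$ and $N\setminus C$, contradicting the module axiom for $N$. Hence $C$ is a strong module of $\G$, so by \ref{thm:stronginterval} it is an interval, say $C=\set{a,a+1,\dots,b}$. (One can also argue directly from \ref{prop:reglabel} without invoking \ref{thm:stronginterval}: if $i<j<k$ with $i,k\in C$ but $j\notin C$, then $ik\notin\G$ by \ref{prop:reglabel}, and choosing a path from $i$ to $k$ in $\G$ and its first vertex $v_\ell$ exceeding $j$ gives $v_{\ell-1}<j<v_\ell$ with $v_{\ell-1}v_\ell\in\G$, so \ref{prop:reglabel} forces $j$ to be adjacent to $v_{\ell-1}$ or $v_\ell$, contradicting $j\notin C$.)

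For the second step, write $L\defa\set{1,\dots,a-1}$ and $R\defa\set{b+1,\dots,n}$, so that $[n]\setminus C = L\dcup R$ and no edge of $\G$ joins $C$ to $L\cup R$. Then for $i\in L$, $j\in C$ we have $i<j$ and $\set{i,j}\notin\T$, hence $\pi(i)<\pi(j)$; symmetrically $\pi(j)<\pi(k)$ for all $j\in C$, $k\in R$. Thus the values of $\pi$ on $L$, on $C$, and on $R$ occupy three consecutive blocks of $[n]$ in increasing order, of sizes $\#L$, $\#C$, $\#R$ respectively; since these sizes sum to $n$, this forces $\pi(L)=L$, $\pi(C)=C$ and $\pi(R)=R$, giving the claim.

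The only point requiring care is the first step, namely that a connected component is an interval; once that is in hand, the rest is a short counting argument on images of intervals. Both routes to the first step are short, so I do not anticipate a genuine obstacle, only the need to state the module/interval fact cleanly.
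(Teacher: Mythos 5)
Your proof is correct, but it takes a genuinely longer route than the paper's. The paper's entire argument is two lines: for $i \in [n]\setminus C$ and $c \in C$ the pair $\set{i,c}$ is not an edge of $\G$, hence not an inversion, so $i<c$ implies $\pi(i)<\pi(c)$ and $i>c$ implies $\pi(i)>\pi(c)$; bijectivity then forces $\pi(C)=C$. In particular the paper never needs to know that $C$ is an interval --- it only uses that $\pi$ preserves the relative order between $C$ and its complement, leaving to the reader the (true, but not entirely wordless) counting fact that a permutation with no inversions between two complementary sets maps each set to itself. You instead first prove that $C$ is an interval --- either by showing it is a strong module and citing \ref{thm:stronginterval}, or directly from \ref{prop:reglabel} --- after which the conclusion reduces to the trivial observation that $\pi(L)<\pi(C)<\pi(R)$ partitions $[n]$ into three consecutive runs of the right sizes. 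Both arguments are sound; yours makes fully explicit a step the paper compresses into ``follows from bijectivity,'' at the cost of invoking the interval machinery from the appendix (your self-contained alternative via \ref{prop:reglabel} avoids even that, and there is no circularity either way since the appendix does not use this lemma). One small imprecision: in your strong-module argument you should also assume $N\not\subseteq C$, since otherwise $N\setminus C=\emptyset$ and no forbidden edge is produced --- though of course that case poses no threat to strongness in the first place.
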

\begin{proof}
	Consider $i\in [n]\setminus C$ and $c \in C$. If $i < c$, then $\pi(i) < \pi(c)$ and the same is true for "$>$", thus the claim follows from bijectivity.
\end{proof}
\begin{lemma}\label{prop:multlemma1}
Assume $\pi = \tau_1 \decomp \tau_2$.
If every connected component of $\G[\tau_2]$ is an induced subgraph of $\G$, then $\pi = \tau_1 \tau_2$. 
\end{lemma}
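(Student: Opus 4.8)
The plan is to show directly that $\tau_1\tau_2$ and $\pi$ have the same inversion set, since a permutation is determined by its inversion set (\ref{prop:reglabel}). Fix $i < j$ in $[n]$; I want to show $ij \in \T[\pi]$ iff $ij \in \T[\tau_1\tau_2]$. Since $\T[\pi] = \T[\tau_1]\dcup\T[\tau_2]$, the left side splits into the two cases ``$ij\in\T[\tau_1]$'' and ``$ij\in\T[\tau_2]$'' (exactly one of which can hold). The key structural input is the hypothesis: every connected component $C$ of $\G[\tau_2]$ is an \emph{induced} subgraph of $\G$, meaning that for $a,b$ in the same component of $\G[\tau_2]$, $ab\in\G[\tau_2] \Leftrightarrow ab\in\G$, and moreover (this is what ``component'' buys us, via \ref{lemma:parab} applied to $\tau_2$) each such $C$ is mapped to itself by $\tau_2$, and in fact $\tau_2$ permutes the components among themselves trivially — $\tau_2$ fixes $[n]\setminus C$ pointwise-as-a-set in the relevant order sense. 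The first thing I would record carefully is: if $i,j$ lie in \emph{different} components of $\G[\tau_2]$, then $ij\notin\T[\tau_2]$, and I claim $\tau_2$ preserves the relative order of $i$ and $j$, so $ij\in\T[\pi]\Leftrightarrow ij\in\T[\tau_1]\Leftrightarrow \tau_2(i)<\tau_2(j)$ and $ij\in\T[\tau_1]$, i.e. $ij\in\T[\tau_1\tau_2]$ by the definition of composition.

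Next I would handle the case where $i,j$ lie in the \emph{same} component $C$ of $\G[\tau_2]$. Here the induced-subgraph hypothesis gives $ij\in\T[\tau_2]\Leftrightarrow ij\in\G[\tau_2]\Leftrightarrow ij\in\G = \T[\pi]$. On the other side, for the composition $\tau_1\tau_2$: we have $ij\in\T[\tau_1\tau_2]$ iff $\tau_1(\tau_2(i)) > \tau_1(\tau_2(j))$ after reordering; I would argue that since $i,j$ are in the same component of $\G[\tau_2]$ but (by the previous paragraph's analysis of the other components) $\tau_1$ ``acts on $C$ the way $\pi$ does modulo $\tau_2$'', the behaviour of $\tau_1\tau_2$ on the pair $i,j$ matches $\pi$. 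More precisely: restrict attention to $C$, where $\pi|_C = (\tau_1|_C)(\tau_2|_C)$ should hold as permutations of the interval $C$, because $C$ is a $\tau_2$-block and — I expect — also a $\tau_1$-block and a $\pi$-block, and the inversions of $\pi$ inside $C$ are exactly the disjoint union of those of $\tau_1$ inside $C$ and $\tau_2$ inside $C$. So the whole problem reduces, component by component, to the statement for the induced permutations, where $\tau_2$ has a connected inversion graph and the hypothesis becomes vacuous/trivial; there one checks $\pi = \tau_1\tau_2$ on the nose because $\T[\tau_2]$ being all of... no — I would instead just verify the inversion-set identity directly inside $C$ using \ref{prop:reglabel}, using that $\T[\pi|_C] = \T[\tau_1|_C]\dcup\T[\tau_2|_C]$ and that $\tau_2|_C$'s inversions coincide with $\pi|_C$'s restricted appropriately.

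The main obstacle, I expect, is bookkeeping the relationship between the three permutations \emph{on each component} and verifying that $C$ is simultaneously a block for $\pi$, $\tau_1$ and $\tau_2$ — in particular that $\tau_1$ respects the component decomposition of $\G[\tau_2]$. This should follow from \ref{lemma:dreieck}/\ref{thm:edgeclass}-type reasoning (edge classes stay together) combined with \ref{lemma:parab}, but pinning down that $\tau_1$ maps $C$ to itself (as a set, with the induced order) is the crux: once that is known, $\pi(i) = \tau_1(\tau_2(i))$ for $i\in C$ follows from bijectivity and order-preservation on $[n]\setminus C$, and then the inversion-set bookkeeping is routine. A cleaner route, which I would try first, is to avoid $\pi = \tau_1\tau_2$ as a statement about maps and instead prove $\T[\tau_1\tau_2] = \T[\tau_1]\dcup\T[\tau_2] = \T[\pi]$ purely combinatorially: for a pair $ij$, $ij\in\T[\tau_1\tau_2]$ can be analyzed by whether $\tau_2$ inverts $ij$ or not, and the hypothesis is exactly what guarantees $\tau_2$ inverts $ij$ iff $\pi$ does \emph{and} $\tau_1$ fixes the order of $\tau_2(i),\tau_2(j)$ in the remaining case — making the two descriptions coincide.
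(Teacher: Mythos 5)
Your overall strategy --- a direct, pairwise comparison of $\T$ with $\T[\tau_1\tau_2]$, split according to whether $i$ and $j$ lie in the same connected component of $\G[\tau_2]$ --- is viable and genuinely different from the paper's route, which computes $\T[\tau_1\tau_2]$ via the identity $\T[\tau_1\tau_2]=\T[\tau_2]\,\Delta\,\tau_2^{-1}\T[\tau_1]\tau_2$ and shows that conjugation by $\tau_2$ fixes $\T[\tau_1]$. But both routes hinge on the same fact, and it is exactly the one you leave unproved: \emph{each connected component $M_k$ of $\G[\tau_2]$ is a module of $\G$, hence of $\G[\tau_1]$}. In your different-components case, the step ``$\tau_2(i)<\tau_2(j)$ and $ij\in\T[\tau_1]$, i.e.\ $ij\in\T[\tau_1\tau_2]$ by the definition of composition'' is a non sequitur: $ij\in\T[\tau_1\tau_2]$ means $\tau_1(\tau_2(i))>\tau_1(\tau_2(j))$, i.e.\ that $\{\tau_2(i),\tau_2(j)\}$ --- not $\{i,j\}$ --- is an inversion of $\tau_1$, and passing between the two is precisely the module property of the components in $\G[\tau_1]$ (combined with \ref{lemma:parab}, which gives $\tau_2(M_k)=M_k$). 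Note also that the property you identify as the crux is misstated: $\tau_1$ does \emph{not} in general map a component $C$ of $\G[\tau_2]$ to itself (only $\tau_2$ does); what is true, and what you need, is that $C$ is a module of $\G[\tau_1]$, i.e.\ a $\tau_1$-block that $\tau_1$ may move elsewhere.

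This module property is not a formal consequence of the stated hypothesis; it genuinely uses that $\pi=\tau_1\decomp\tau_2$ is a valid inv-decomposition. For instance, for $\pi=3142$ one has $\T=\{12,14,34\}$, the single edge $T_2=\{12\}$ is the inversion set of $2134$ and its component $\{1,2\}$ is an induced subgraph of $\G$, yet $\{1,2\}$ is not a module of $\G$ (since $14\in\G$ but $24\notin\G$); this situation is only excluded because $\{14,34\}$ fails \ref{prop:reglabel}, so no such $\tau_1$ exists. The missing argument is the central portion of the paper's proof: because $\pi=\tau_1\decomp\tau_2$, the edge set of $\G[\tau_2]$ inside $M_k$ is a union of edge classes (\ref{lemma:dreieck}); letting $M'$ be the smallest strong module of $\G$ containing $M_k$ and invoking the classification of edge classes (\ref{thm:edgeclass}), one gets $M_k=M'$ when $M'$ is prime and $M_k$ a union of m.s.s.\ of $M'$ when $M'$ is serial (the parallel case is impossible since $M_k$ is connected), whence $M_k$ is a module by \ref{lemma:L2}. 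Until you supply an argument of this kind, the proof is incomplete; with it, your case analysis goes through.
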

\begin{proof}
We will prove that $\T[\tau_1 \tau_2] = \T$. Let $M_1,\ldots,M_s$ be the vertex sets of the connected components of $\G[\tau_2]$. By \cite[Ex 1.12]{bjoernerbrenti} it holds that 
\[ \T[\tau_1 \tau_2] = \T[\tau_2] \operatorname{\Delta} \tau_2^{-1} \T[\tau_1] \tau_2 \;.\]
Here, $A\operatorname{\Delta} B = A\setminus B \cup B\setminus A$ denotes the symmetric difference.
First, we observe that the two sets are disjoint, thus the symmetric difference is actually a disjoint union.
To see this, note that every edge of $\G[\tau_2]$ has both endpoints in the same $M_k$ for a $1\leq k \leq s$, and every edge of $\G[\tau_1]$ has its endpoints in different sets or in $[n]\setminus \bigcup M_k$. Since by \ref{lemma:parab} it holds that $\tau_2(M_k) = M_k$ for every $k$, this property is preserved under the conjugation with $\tau_2$. Hence, the sets are disjoint.

Next, we prove that every $M_k$ is a $\G$-module.
So fix a $k$, let $M'$ be the smallest strong module of $\G$ containing $M_k$ and let $G_k$ be the subgraph of $\G$ induced by $M_k$.
Because $\pi = \tau_1 \decomp \tau_2$ is a valid decomposition, $G_k$ is a union of edge classes.
Thus, if $M'$ is prime, we conclude that $M_k = M'$ and we are done.
If $M'$ is parallel, then $G_k$ cannot be connected, thus we only need to consider the case that $M'$ is serial.
But in this case, $M_k$ is a union of m.s.s. of $M'$ because of the form of the edge classes, given by \ref{thm:edgeclass}.
By \ref{lemma:L2}, we conclude that $M_k$ is indeed a module of $\G$.
Moreover, it follows that $M_k$ is also a module of $\G[\tau_1]$, because $\G$ and $\G[\tau_1]$ differ only inside the $M_k$.
Finally, consider the set
\begin{align*}
\tau_2^{-1} \T[\tau_1] \tau_2 &= \set{\set{i,j} \in \T[\tau_1]\with i,j \notin M_k \forall k} \\
&\dcup \bigcup_k \set{ \set{\tau_2(i),j} \with \set{i,j} \in \T[\tau_1], i \in M_k , j\notin M_l \forall l}\\
&\dcup \bigcup_k \bigcup_l \set{ \set{\tau_2(i),\tau_2(j)} \with \set{i,j} \in \T[\tau_1], i\in M_k, j \in M_l} \,.
\end{align*}
Because $\tau_2(M_k) = M_k$ and $M_k$ is a module of $\G[\tau_1]$ for all $k$, it holds that
\begin{multline*}
\set{ \set{\tau_2(i),j} \with \set{i,j} \in \T[\tau_1], i \in M_k , j\notin M_l \forall l} = \\
\set{ \set{i,j} \in \T[\tau_1] \with i \in M_k , j\notin M_l \forall l}
\end{multline*}
and
\begin{multline*}
\set{ \set{\tau_2(i),\tau_2(j)} \with \set{i,j} \in \T[\tau_1], i\in M_k, j \in M_l} = \\
 \set{ \set{i,j}\in \T[\tau_1]\with i\in M_k, j \in M_l} \,.
\end{multline*}
Hence $\tau_2^{-1} \T[\tau_1] \tau_2 = \T[\tau_1]$ and the claim follows.
\end{proof}
\begin{proof}[Proof of \ref{thm:mult}]
For the first statement, assume that $\pi$ is inv-decomposable.
Then by \ref{cor:anzahl} there are either at least two non-parallel strong $\pi$-blocks $I_1, I_2 \subset [n]$, or at least one serial strong $\pi$-block $I_3$ with at least three m.s.s..

In the first case, we may assume $I_1 \nsubseteq I_2$. We set $T_2$ to be the set of edges in the induced subgraph of $\G$ on $I_2$.
In the second case, we set $T_2$ to be the set of edges in the induced subgraph of $G$ on the union of the two first m.s.s. of $I$.
In both cases, we set $T_1 = \T \setminus T_2$. By \ref{thm:klass}, this is a valid inv-decomposition, and by \ref{prop:multlemma1} it is multiplicative.

For the second statement, we will prove that $\G$ contains a complete subgraph on $4$ vertices. Let $\pi = \tau_1 \decomp \tau_2$ be a non-multiplicative inv-decomposition. 
Consider a minimal path from $1$ to $n$ in $\G$. If $i$ and $j$ are two vertices in this path that are not adjacent in this path, then they are not adjacent in $\G$, because otherwise we had a shortcut.
Thus by \ref{lemma:dreieck} we conclude that every edge in this path lies in the same edge class.
Hence either $\G[\tau_1]$ or $\G[\tau_2]$ contains a path connecting $1$ with $n$, say $\G[\tau_1]$.
By \ref{lemma:dreieck} this implies that $\G[\tau_1]$ has no isolated vertices.

By our hypothesis and by \ref{prop:multlemma1}, there exists a connected component of $\G[\tau_2]$ that is not an induced subgraph of $\G$. 
Then there exist $1\leq i,j\leq n$ such that $ij\in \G[\tau_1]$ and there is a minimal path $i, i', \ldots, j$ connecting $i$ and $j$ in $\G[\tau_2]$.
By \ref{lemma:dreieck} we have $i'j \in \G$.
We also want to make sure that $i'j \in \G[\tau_2]$.
If this is not the case, then replace $i$ by $i'$.
Then the corresponding statements still hold, but the minimal path is shorter.
Thus, by induction we may assume $i'j \in \G[\tau_2]$.
Since $\G[\tau_1]$ has no isolated vertices, there is a vertex $k$ such that $i'k \in \G[\tau_1]$.
Again by \ref{lemma:dreieck} we conclude that $ik, jk \in \G$.
Thus $\G$ contains the complete subgraph on $i,i',j$ and $k$.
\end{proof}

\subsection{Characterization of inv-decomposability}
We use the results we have proven so far to derive a characterization of inv-decomposability.
Let us recall the definition of the Linear Ordering Polytope.
To every permutation $\pi$ we associate a vector $v_{\pi} \in \mathbb{R}^{n^2}$ by setting
\[
(v_\pi)_{ij} = \begin{cases}
1 &\text{ if } \pi(i) < \pi(j) , \\
0 &\text{ otherwise. }
\end{cases}
\]
The Linear Ordering Polytope is defined to be the convex hull of these vectors.
The inv-decomposability of a permutation $\pi$ can now be characterized as follows.
\begin{theorem} \label{cor:crit}
	For $\pi \in \Sn$ the following statements are equivalent:
	\begin{enumerate}
		\item There exist $\tau_1,\tau_2 \in \Sn\setminus\set{\id{n}}$ such that $\T = \T[\tau_1] \dcup \T[\tau_2]$ and $\pi = \tau_1 \tau_2$, i.e. $\pi$ has a multiplicative inv-decomposition.
		\item There exist $\tau_1,\tau_2 \in \Sn\setminus\set{\id{n}}$ such that $\T = \T[\tau_1] \dcup \T[\tau_2]$, i.e. $\pi$ is inv-decomposable.
		\item $v_{\pi}$ is not a neighbour of the identity in the graph of the linear ordering polytope.
		\item There are at least two edge classes of $\G$.
		\item There are at least two (not necessarily strong) non-trivial non-parallel $\pi$-blocks. (By a non-trivial $\pi$-block, we mean a $\pi$-block that is neither a singleton nor $[n]$)
	\end{enumerate}
\end{theorem}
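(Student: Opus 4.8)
The plan is to use statement $(2)$ as a hub: prove $(1)\Leftrightarrow(2)$, $(2)\Leftrightarrow(4)$ and $(4)\Leftrightarrow(5)$ from the machinery already set up, and handle $(2)\Leftrightarrow(3)$ by a separate, polytopal argument which is where the real work lies. Here $(1)\Rightarrow(2)$ is immediate, and $(2)\Rightarrow(1)$ is exactly \ref{thm:mult}(1). For $(2)\Leftrightarrow(4)$: by \ref{thm:edgeclass} the number of edge classes of $\G$ equals $m+\sum_i\binom{i}{2}k_i$ in the notation of \ref{cor:anzahl} — the external edges of each strong prime module form one edge class, a strong serial module with $i$ maximal strong submodules contributes the $\binom{i}{2}$ classes $M_lM_{l'}$, and these are pairwise disjoint and account for every edge since each edge is external to exactly one strong module. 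Comparing with \ref{cor:anzahl}, which says $\pi$ is inv-decomposable iff $2^m\prod_i(i!)^{k_i}>2$, a short case distinction on $m\in\{\,0,1,\ge 2\,\}$ shows that the two conditions coincide. For $(4)\Leftrightarrow(5)$ one translates the list of edge classes into the $\pi$-block language using \ref{thm:edgeclass}, \ref{thm:stronginterval} and \ref{lemma:L2}.

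For $(2)\Rightarrow(3)$: if $\pi=\tau_1\decomp\tau_2$, then a coordinate check — using $(v_\sigma)_{ij}+(v_\sigma)_{ji}=1$ and, for $i<j$, that $(v_\sigma)_{ij}=1$ iff $\{i,j\}\notin\T[\sigma]$ — yields $v_{\id{n}}+v_\pi=v_{\tau_1}+v_{\tau_2}$. Since $\tau_1,\tau_2\ne\id{n}$ and (a permutation being determined by its inversion set, cf.\ \ref{prop:reglabel}) also $\tau_1,\tau_2\ne\pi$, the midpoint of the segment $[v_{\id{n}},v_\pi]$ is a convex combination of vertices other than its two endpoints, so this segment is not an edge of the linear ordering polytope; hence $v_\pi$ is not a neighbour of $v_{\id{n}}$.

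For $(3)\Rightarrow(2)$ — the heart of the matter — I would argue the contrapositive. First reformulate adjacency: $[v_{\id{n}},v_\pi]$ is an edge iff some linear functional attains its maximum over the polytope exactly on $\{\,v_{\id{n}},v_\pi\,\}$. Passing from a functional $c$ to $\beta_{\{i,j\}}:=c_{ij}-c_{ji}$ (and putting a large penalty on the coordinates $\{i,j\}\notin\T$, which are equal for $v_{\id{n}}$ and $v_\pi$), one sees such a functional exists iff there is $\beta\in\mathbb{R}^{\T}$ with $\sum_{e\in\T}\beta_e=0$ and $\langle\beta,\chi_T\rangle>0$ for every proper nonempty inversion set $T\subseteq\T$, where $\chi_T$ denotes the indicator of $T$. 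By a theorem of the alternative, no such $\beta$ exists precisely when $\chi_{\T}$ lies in the cone generated by these $\chi_T$, i.e.\ when $\T$ admits a \emph{fractional} decomposition into proper inversion sets. So it suffices to prove: if $\pi$ is not inv-decomposable then $\T$ has no such fractional decomposition. Suppose $\sum_k\lambda_k\chi_{T_k}=\chi_{\T}$ with all $\lambda_k>0$ and each $T_k$ a proper nonempty inversion set (necessarily $T_k\subseteq\T$). By $(2)\Leftrightarrow(4)$, $\G$ has a single edge class (we may assume $\T\ne\emptyset$). For $e,f\in\T$ sharing a vertex with the remaining pair not in $\G$, applying \ref{prop:reglabel} inside each $T_k$ — the same computation as in the proof of \ref{lemma:dreieck} — shows that one of $e\in T_k\Rightarrow f\in T_k$, $f\in T_k\Rightarrow e\in T_k$ holds for every such $T_k$; say the latter. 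Then $\{\,k: f\in T_k\,\}\subseteq\{\,k: e\in T_k\,\}$, and since $\sum_{f\in T_k}\lambda_k=1=\sum_{e\in T_k}\lambda_k$, the reverse implication also holds whenever $\lambda_k>0$. By transitivity of the edge-class relation, every $T_k$ with $\lambda_k>0$ contains all of $\T$ or none of it, contradicting properness. Hence $\chi_{\T}$ is not in the cone, so $v_\pi$ \emph{is} a neighbour of $v_{\id{n}}$, contrary to $(3)$.

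I expect the main obstacle to be the polytopal reformulation in $(3)\Rightarrow(2)$: justifying that a supporting functional of the putative edge $[v_{\id{n}},v_\pi]$ may be taken antisymmetric on each coordinate pair inside $\T$ with the outside coordinates held fixed, and then applying the theorem of the alternative cleanly; once this bookkeeping is in place, the combinatorial input (re-running \ref{lemma:dreieck} for a single inversion set) is short. The promised remark on the gap in \cite{young} would then be that the adjacency criterion used there amounts to requiring $\T$ to have no proper nonempty sub-inversion-set — a condition a priori stronger than inv-decomposability, as the simple permutation $3142$ shows: it has a single edge class, so $v_{3142}$ is a neighbour of $v_{\id{n}}$, yet $\set{\{1,2\}}$ is a proper inversion set contained in $\T[3142]$.
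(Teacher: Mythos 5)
Your proof is correct, and most of it runs parallel to the paper's: $(1)\Leftrightarrow(2)$ is \ref{thm:mult}; your coordinate identity $v_{\id{n}}+v_\pi=v_{\tau_1}+v_{\tau_2}$ is exactly the paper's midpoint argument for $(2)\Rightarrow(3)$; and the equivalence of $(2)$, $(4)$ and $(5)$ is handled in both cases by \ref{thm:edgeclass} together with the counting formula of \ref{cor:anzahl} (the paper routes it as $(4)\Rightarrow(5)\Rightarrow(2)$, your explicit count of edge classes as $m+\sum_i\binom{i}{2}k_i$ is a harmless repackaging of the same facts). The genuine divergence is the remaining polytopal implication. The paper proves $(3)\Rightarrow(4)$ directly: it invokes the standard fact that non-adjacency yields $\lambda v_{\id{n}}+(1-\lambda)v_\pi=\sum_i\lambda_i v_{\tau_i}$ with all $\tau_i\neq\pi$, clears denominators, and then runs the multiplicity argument (the number of $\tau_i$ containing the ``long'' inversion equals the number containing the ``short'' one, so each $\T[\tau_i]$ is a union of edge classes). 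You instead prove the contrapositive $\neg(2)\Rightarrow\neg(3)$ by constructing a supporting functional: after reducing adjacency to a linear system on $\mathbb{R}^{\T}$, a theorem of the alternative identifies its infeasibility with a fractional decomposition $\chi_{\T}=\sum_k\lambda_k\chi_{T_k}$ into proper inversion sets, which the \emph{same} multiplicity argument excludes when there is one edge class. The combinatorial core is thus identical; your version buys a self-contained certificate of adjacency (it actually exhibits the maximizing functional rather than citing the convex-combination characterization of non-neighbours) at the cost of the LP-duality bookkeeping, and I checked that this bookkeeping goes through. Your closing diagnosis of the gap in \cite{young} is compatible with the paper's, which makes the analogous point with the non-disjoint cover $\T[2413]=\T[2314]\cup\T[1423]$ in place of your $3142$ example.
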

In \cite{young}, the implications $(2) \Rightarrow (3) \Rightarrow (4) \Rightarrow (5) \Rightarrow (2)$ are proven, 
although the condition $(2)$ is not explicitly mentioned.
As indicated in \ref{sec:intro}, there a gap in the proof. Indeed on page 4 of \cite{young}, in the proof of the implication $(3) \Rightarrow (4)$ the following argument is used.
If $v_{\pi}$ is not a neighbour of $v_{\id{n}}$, then there is a point on the line between the points that can be 
written as convex combination of other vertices, e.g. $\lambda v_{\id{n}} + (1-\lambda) v_{\pi} = \sum \lambda_i v_{\tau_i}$ for $\lambda, \lambda_i \in [0,1]$ and the $\lambda_i$ sum up to $1$.
Considering the support set of the vectors on the left and right-hand side of this equation we obtain an expression $\T = \bigcup \T[\tau_i]$. Note, that in general this union is not disjoint.
In \cite{young}, the existence of this expression, together with the assumption that $\G$ has only one edge class leads to a contradiction, proving $(3) \Rightarrow (4)$.
But $\T[2413] = \set{13,23,24} = \T[2314] \cup \T[1423]$ and $\G[2413]$ has only one edge class, providing a counterexample to above argument.
Since the notation of \cite{young} is different from ours, we provide a full proof of the implications for the convenience of the reader.
\begin{proof}
	\begin{itemize}
		\item[$1 \Leftrightarrow 2:$] \ref{thm:mult}.
		\item[$2 \Rightarrow 3:$] If $\T = \T[\tau_1] \dcup \T[\tau_2]$, then the midpoint of the line connecting $v_{\id{n}}$ and $v_\pi$ is also the midpoint of the line connecting $v_{\tau_1}$ and $v_{\tau_2}$, thus it cannot be an edge.
		\item[$3 \Rightarrow 4:$] If $v_{\pi}$ is not a neighbour of $v_{\id{n}}$, then we can write
		$\lambda v_{\id{n}} + (1-\lambda) v_{\pi} = \sum \lambda_i v_{\tau_i}$ for $\lambda, \lambda_i \in [0,1]$ and $\tau_i \neq \pi$ for every $i$.
		We clear denominators to make the coefficients integral. The important observation is that every non-zero component of the right-hand side has the same value.
		
		Consider $a,b,c \in [n]$ such that $ab, bc \in \T$ and $ac \notin \T$. Then $b$ cannot lie between $a$ and $c$, because of \ref{prop:reglabel}. There remain four possible relative orders of $a,b$ and $c$. We assume $b <a<c$, the other cases follow analogously.
		Every $\tau_i$ with $bc \in \T[\tau_i]$ has also $ba \in \T[\tau_i]$, again by \ref{prop:reglabel}.
		But the number of $\tau_i$ having the inversion $bc$ equals the number of those having $ba$. Hence, every $\tau_i$ has either both or none of the inversions.
		It follows that if $\T[\tau_i]$ contains an inversion, then it already contains the whole edge class of it. Thus if $\G$ has only one edge class, then for every $i$ either $\tau_i = \pi$ or $\tau_i = \id{n}$, which is absurd.
		
		\item[$4 \Rightarrow 5:$] This follows from the description of the edge classes, \ref{thm:edgeclass}.
		\item[$5 \Rightarrow 2:$] Under our hypothesis, the formula in \ref{cor:anzahl} cannot evaluate to zero.
	\end{itemize}
\end{proof}

\subsection{Substitution decomposition}
We give a reformulation of \ref{thm:klass} avoiding notions from graph theory. For this, we employ the concept of \emph{substitution decomposition}, which was introduced in \cite{albert2005simple}, see \cite{brignall2010survey} for a survey.
We start by giving an explicit description of the three types of $\pi$-blocks.
\begin{proposition}\label{prop:intcharak}
	Let $I \subset [n]$ be a $\pi$-block with at least two elements and let $I_1 < \ldots < I_l$ be its maximal strong submodules.
	\begin{enumerate}
		\item $I$ is parallel if and only if $\pi(I_1) < \pi(I_2) < \ldots < \pi(I_l)$.
		\item $I$ is serial   if and only if $\pi(I_1) > \pi(I_2) > \ldots > \pi(I_l)$.
		\item Otherwise $I$ is prime.
	\end{enumerate}
\end{proposition}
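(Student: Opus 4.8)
The plan is to translate the condition on the images $\pi(I_a)$ into the trichotomy of Theorem~\ref{thm:moddecomp} applied to the induced subgraph $\G_I$; recall that ``$I$ is parallel/serial/prime'' means precisely that $\G_I$ falls into the parallel/serial/prime case of that theorem. First I would assemble the basic facts about the pieces $I_1<\cdots<I_l$. Since $I$ is a $\pi$-block it is a module of $\G$ by \ref{thm:stronginterval}, so by \ref{prop:untermod} together with \ref{thm:moddecomp} it has maximal strong submodules (m.s.s.) $I_1,\ldots,I_l$ with $l\geq 2$; each $I_a$ is a strong module of $\G$, hence a strong $\pi$-block, hence an interval, so the chain $I_1<\cdots<I_l$ is meaningful. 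Moreover each $\pi(I_a)$ is an interval, and since $\pi(I)=\bigsqcup_a\pi(I_a)$ is an interval, the intervals $\pi(I_1),\ldots,\pi(I_l)$ partition $\pi(I)$; in particular for $a\neq b$ exactly one of $\pi(I_a)<\pi(I_b)$ and $\pi(I_b)<\pi(I_a)$ holds.

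The key observation is: for $a<b$, the m.s.s. $I_a$ and $I_b$ are connected in $\G$ if and only if $\pi(I_b)<\pi(I_a)$. Indeed $I_a,I_b$ are disjoint modules, so there are either all or no edges between them; picking $i\in I_a$, $j\in I_b$ we have $i<j$, and $ij\in\G$ iff $\pi(i)>\pi(j)$, so ``all edges present'' is equivalent to $\min\pi(I_a)>\max\pi(I_b)$, i.e. $\pi(I_b)<\pi(I_a)$, while ``no edge present'' is equivalent to $\pi(I_a)<\pi(I_b)$. Thus $\pi$ reverses the relative order of $I_a$ and $I_b$ exactly when they are connected.

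Now I would read off the three cases. If $I$ is parallel, then $\G_I$ is disconnected with connected components exactly $I_1,\ldots,I_l$, so distinct m.s.s. lie in distinct components and there is no edge between any two of them; by the key observation $\pi(I_a)<\pi(I_b)$ for all $a<b$, which by transitivity of ``$<$'' on nonempty sets is the chain $\pi(I_1)<\cdots<\pi(I_l)$. Conversely, if $\pi(I_1)<\cdots<\pi(I_l)$ there is no edge between distinct m.s.s., so $\G_I$ is disconnected (as $l\geq 2$), and since a disconnected graph on at least two vertices has connected complement, the trichotomy of \ref{thm:moddecomp} forces the parallel case. The serial case is completely analogous with $\comp{\G_I}$ in place of $\G_I$: $I$ is serial iff $\comp{\G_I}$ is disconnected with the $I_a$ as its components iff every pair of distinct m.s.s. is connected in $\G$ iff $\pi(I_a)>\pi(I_b)$ for all $a<b$ iff $\pi(I_1)>\cdots>\pi(I_l)$. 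Finally, since \ref{thm:moddecomp} guarantees that exactly one of parallel, serial, prime occurs, and we have matched parallel with (1) and serial with (2), the prime case is precisely the remaining alternative, i.e. the sequence $\pi(I_1),\ldots,\pi(I_l)$ is neither increasing nor decreasing.

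I do not expect a real obstacle here, as the argument is bookkeeping around \ref{thm:moddecomp}; the one point needing care is the two converse directions (``no inter-block edges $\Rightarrow$ parallel'' and its serial analogue), where one must invoke the full trichotomy of \ref{thm:moddecomp} — so that $\G_I$ disconnected actually lands in the parallel case — rather than merely noting that some pair of blocks is unjoined.
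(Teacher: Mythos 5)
Your proposal is correct and follows essentially the same route as the paper: both reduce the statement to the observation that two maximal strong submodules $I_a < I_b$ are joined by (all) external edges in $\G_I$ exactly when $\pi$ reverses their relative order, and then match "no external edges" with the parallel case and "all external edges" with the serial case via \ref{thm:moddecomp}. Your write-up merely spells out the converse directions and the trichotomy more explicitly than the paper's two-line proof does.
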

\begin{proof}
	This is consequence of \ref{thm:stronginterval}. $I$ is parallel if and only if it has no external edges. This translates to the statement that the relative order of the $I_i$ is preserved.
	Similarly, $I$ is serial if and only if it has all possible external edges. Again, this translates to the statement that the relative order of the $I_i$ is reversed.
\end{proof}
In the remainder of this section, we consider permutations as words $\pi = \pi_1 \pi_2 \ldots \pi_n$. The \emph{size} of a permutation is the number of letters in its word\footnote{This is called `length' in \cite{brignall2010survey} but we reserve that notion for the number of inversions.}.
The special word $\id{n} \defa 1 2 \ldots (n-1) n$ is called an \emph{identity}. If $\pi = \pi_1 \pi_2 \ldots \pi_n$ is a permutation, we call $\rev{\pi} \defa \pi_n \pi_{n-1} \ldots \pi_1$ the \emph{reversal} of $\pi$. The word $\wo{n} \defa \rev{\id{n}} = n (n-1) \ldots 2 1$ is called \emph{reverse identity}.
Two finite sequences $a_1, \ldots, a_q$ and $b_1, \ldots, b_q$ of natural numbers are called \emph{order isomorphic} whenever $a_i < a_j$ if and only if $b_i < b_j$. Given a permutation $\pi \in S_m$ and $m$ further permutations $\sigma_1, \ldots, \sigma_m$ of not necessarily the same size, we define the
\emph{inflation} $\pi[\sigma_1, \ldots, \sigma_m]$ by replacing the value $\pi(i)$ by an interval order isomorphic to $\sigma_i$. For a more detailed treatment of the inflation operation see \cite{wehlau}.
A permutation $\pi$ is called \emph{simple} if there are no other $\pi$-blocks than $[n]$ and the singletons. Note that by \ref{thm:stronginterval} 
 a permutation $\pi$ is simple if and only if its inversion graph $\G$ is prime.

\begin{proposition}
	Every permutation $\pi$ can be uniquely expressed as an iterated inflation, such that every permutation appearing in this expression is either an identity, a reverse identity or a simple permutation, and no identity or reverse identity is inflated by a permutation of the same kind.
\end{proposition}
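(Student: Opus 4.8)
The plan is to induct on the size $n$ of $\pi$, organising the decomposition along the tree of strong $\pi$-blocks, which by \ref{thm:stronginterval} coincides with the tree of strong modules of $\G$. The base case $n=1$ is trivial (the one-letter word is an identity). For $n\ge 2$, let $M_1<\cdots<M_p$ be the maximal strong submodules of $\G$; by \ref{thm:moddecomp} they partition $[n]$ with $p\ge 2$, and by \ref{thm:stronginterval} each $M_i$ is an interval with $\pi(M_i)$ an interval. Let $\widehat\pi\in S_p$ record the relative order of $\pi(M_1),\ldots,\pi(M_p)$ and let $\sigma_i$ be the permutation order-isomorphic to the restriction of $\pi$ to $M_i$; then $\pi=\widehat\pi[\sigma_1,\ldots,\sigma_p]$. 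I will use repeatedly the elementary fact that, for $S\subseteq[p]$, the union $\bigcup_{i\in S}M_i$ is a module of $\G$ exactly when $S$ is a module of $\G[\widehat\pi]$, which is immediate from the inflation identity and the observation that $M_a$ and $M_b$ are fully connected in $\G$ iff $ab$ is an inversion of $\widehat\pi$.

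For existence I would argue as follows. By \ref{prop:intcharak}, $\widehat\pi=\id p$ when $[n]$ is parallel and $\widehat\pi=\wo p$ when $[n]$ is serial; when $[n]$ is prime, $\widehat\pi$ is simple, since a nontrivial $\widehat\pi$-block would give a union of at least two but not all of the $M_i$ that is a module of $\G$, impossible for a prime $[n]$ by the argument in the proof of \ref{lemma:L2}. Each $\sigma_i$ has size $<n$, so it has an expansion of the required form by induction, and substituting produces one for $\pi$. It then remains to verify the ``no same kind'' condition, i.e.\ that no $M_i$ has the same type (parallel, resp.\ serial) as $[n]$. If $[n]$ and $M_1$ were both parallel, with $M_1$ having maximal strong submodules $N_1<\cdots<N_q$ ($q\ge 2$), then $N_q\cup M_2$ would be a module of $\G$ — no edge leaves it, because $[n]$ and $M_1$ are parallel — and it overlaps $M_1$ (it meets $M_1$ in $N_q$, contains $M_2\not\subseteq M_1$, and is disjoint from $N_1\subseteq M_1$), contradicting the strongness of $M_1$. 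The serial case follows by passing to $\comp{\G}$.

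For uniqueness I induct on $n$ again. Let $\pi=\alpha[\sigma_1,\ldots,\sigma_m]$ be the outermost layer of a valid expansion, so $m\ge 2$ and $\alpha$ is $\id m$, $\wo m$, or simple, and let $J_1<\cdots<J_m$ be the corresponding intervals; these are $\pi$-blocks, hence modules of $\G$, and as above $\bigcup_{i\in S}J_i$ is a module of $\G$ iff $S$ is a module of $\G[\alpha]$. It suffices to show $\{J_1,\ldots,J_m\}$ is the set of maximal strong submodules of $\G$ and $\alpha=\widehat\pi$; the induction hypothesis applied to each $\sigma_i$ then finishes the proof. If $\alpha=\id m$, then $\G$ is the disjoint union of copies of $\G[\sigma_1],\ldots,\G[\sigma_m]$, hence disconnected, and its components refine $\{J_i\}$; the ``no same kind'' hypothesis (no $\sigma_i$ is parallel, so each $\G[\sigma_i]$ is connected) forces the components to be exactly the $J_i$, which are the maximal strong submodules by the parallel case of \ref{thm:moddecomp}. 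The case $\alpha=\wo m$ is dual via $\comp{\G}$. If $\alpha$ is simple, then $\G$ is prime, and I must show every maximal strong submodule $M$ equals some $J_i$: since $M$ is strong and $J_i$ a module, $M$ does not overlap $J_i$, so for each $i$ one of $M\cap J_i=\emptyset$, $M\subseteq J_i$, $J_i\subseteq M$ holds. If $M\subsetneq J_{i_0}$ for some $i_0$, then the smallest strong module $\widetilde J$ containing $J_{i_0}$ must be a proper submodule of $[n]$ — otherwise \ref{lemma:L2} would present $J_{i_0}$ as a union of maximal strong submodules of a parallel or serial strong module, necessarily a proper one (as $[n]$ is prime) and hence strictly inside $\widetilde J=[n]$, a contradiction — so $\widetilde J$ lies in some maximal strong submodule $M'$, giving $M\subsetneq J_{i_0}\subseteq M'$ with both maximal, a contradiction; thus $M$ is a union of some $J_i$, and since $\G[\alpha]$ is prime this union must be a single $J_i$. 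As $\{M_i\}$ and $\{J_i\}$ both partition $[n]$ and each $M_i$ is some $J_j$, the two families coincide, and finally $\alpha$ and $\widehat\pi$ both encode the order of the $\pi(M_i)$, so $\alpha=\widehat\pi$.

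I expect the main obstacle to be precisely this last step, the prime case of the uniqueness argument: showing that the slots of an \emph{arbitrary} valid expansion must be the maximal strong submodules of $\G$. This is where one has to combine the module–overlap machinery of \cite{brandt} with \ref{lemma:L2} and \ref{thm:moddecomp}, rather than just manipulating words; the parallel and serial cases, as well as the whole existence direction, are comparatively routine once \ref{prop:intcharak} and the ``$N_q\cup M_2$'' trick are in hand.
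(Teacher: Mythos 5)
Your proposal is correct, and for the existence direction it is essentially the paper's proof: decompose along the maximal strong submodules (equivalently, by \ref{thm:stronginterval}, the maximal strong $\pi$-blocks), identify the quotient permutation as an identity, reverse identity or simple permutation via \ref{thm:moddecomp} and \ref{prop:intcharak}, and recurse. Two points of comparison are worth making. First, for the ``no same kind'' condition the paper gets away with a one-line observation --- a parallel module's m.s.s.\ are the connected components of the induced subgraph, and connected components are connected, hence not parallel (dually for serial via $\comp{\G}$) --- whereas you build the explicit overlapping module $N_q\cup M_2$ to contradict strongness of $M_1$; both arguments are valid, the paper's is just shorter. Second, and more significantly, the paper's proof establishes only existence and the ``no same kind'' clause; uniqueness is asserted in the statement but never argued (the author points to the cited survey for the analogous decomposition). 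Your uniqueness argument --- showing that the slots $J_1,\ldots,J_m$ of \emph{any} admissible outermost inflation must coincide with the maximal strong submodules, with the prime case handled by ruling out $M\subsetneq J_{i_0}$ through \ref{lemma:L2} and the primality of $\G[\alpha]$ --- is sound and fills in exactly the part the paper leaves implicit. The only cosmetic caveat is the treatment of singleton slots (a one-letter word is formally ``an identity,'' so the ``no same kind'' convention has to be read as applying only to slots of size at least two for the parallel/serial uniqueness step to be stated cleanly), but the paper glosses over the same convention.
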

We call this the substitution decomposition of $\pi$. It is slightly different from the decomposition in \cite{brignall2010survey}. The existence of our decomposition follows from the existence of the decomposition given in that paper, but we consider it to be instructive for our discussion to give a proof nevertheless.
\begin{proof}
	Let $I_1 < I_2 < \ldots < I_l$ be the maximal strong $\pi$-subblocks of $[n]$.
	Define a permutation $\alpha \in S_l$ by requiring $\alpha(i) < \alpha(j) \Leftrightarrow \pi(I_i) < \pi(I_j)$ for $1\leq i,j\leq l$. Moreover, let $\sigma_i$ be the permutation order isomorphic to $\pi(I_i)$ for $1\leq i\leq l$. Then $\pi = \alpha[\sigma_1,\ldots,\sigma_l]$.
	By \ref{thm:moddecomp} the $\pi$-block $[n]$ is either parallel, serial or prime.
	Hence by \ref{prop:intcharak} we conclude that $\alpha$ is either an identity, a reverse identity or simple. By applying this procedure recursively to the $\sigma_i$, we get the claimed decomposition.
	
	The last claim follows also from \ref{thm:moddecomp}, because it implies that no serial module has a maximal strong submodule which is again serial, and the same for parallel modules. This is just the statement that connected components of a graph are connected.
\end{proof}
The proof gives a correspondence between the strong $\pi$-blocks and the permutations appearing in the substitution decomposition. The strong parallel, serial and prime $\pi$-blocks correspond to the identities, reverse identities and simple permutations, respectively.
Now we can reformulate \ref{thm:klass} in terms of inflations:
\begin{algorithm}
	Let $\pi$ be a permutation. Define two new permutations $\tau_1,\tau_2$ in the following way:
	Write down two copies of the substitution decomposition of $\pi$.
	For every simple permutation in it, replace it in one of the copies by an identity.
	For every reverse identity, replace it in one copy by an arbitrary permutation $\sigma$ of the same size and in the other by the reverse $\rev{\sigma}$.
	Then let $\tau_1$ and $\tau_2$ be the permutations defined by these iterated inflations.
\end{algorithm}
\begin{theorem} \label{thm:invdecomp}
	Let $\pi, \tau_1, \tau_2$ be permutations as above and assume that $\tau_1, \tau_2 \neq \id{n}$.
	Then $\pi = \tau_1 \decomp \tau_2$ and every pair $(\tau_1, \tau_2)$ satisfying this condition can be found this way.
\end{theorem}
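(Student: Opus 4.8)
The plan is to translate the construction above into the language of \ref{thm:klass}, using the correspondence stated after the substitution decomposition: by \ref{thm:stronginterval} and \ref{prop:intcharak}, the permutations occurring in the substitution decomposition of $\pi$ correspond to the strong $\pi$-blocks with at least two elements, such a permutation being an identity, a reverse identity or a simple permutation according as the block is parallel, serial or prime, and a strong serial block with maximal strong submodules $M_1 < \ldots < M_p$ corresponding to the factor $\wo p$. The single computational ingredient is the behaviour of inversion sets under inflation: if $\rho = \alpha[\sigma_1, \ldots, \sigma_l]$ and $B_1 < \ldots < B_l$ are the intervals occupied, in word order, by $\sigma_1, \ldots, \sigma_l$, then $\T[\rho]$ is the disjoint union of a copy of $\T[\sigma_i]$ inside each $B_i$ together with the complete edge sets $B_iB_j$ for all $\set{i,j} \in \T[\alpha]$. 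Consequently every edge of $\G$ is either internal to a unique factor or an external edge of a unique prime or serial factor, and for a serial factor these external edges are precisely the edge classes $M_iM_j$ of \ref{thm:edgeclass}.

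For the first assertion I would fix the choices the construction makes, put $T_i \defa \T[\tau_i]$, and prove $\T = T_1 \dcup T_2$ by induction on the size $n$ of $\pi$, applying the inflation formula at the root $\pi = \alpha[\sigma_1, \ldots, \sigma_l]$ of the substitution decomposition (the case $n = 1$ being trivial). If $\alpha$ is an identity it has no inversions and is left unchanged in both copies, so the statement reduces to the inductive hypothesis for each $\sigma_i$. If $\alpha$ is simple it is kept in one copy and replaced by $\id l$ in the other, so each of its external edges lies in exactly one of $T_1, T_2$, while the internal edges are handled by induction. If $\alpha = \wo l$, then every external edge $B_iB_j$ is an inversion of $\pi$, and in the two copies $\alpha$ is replaced by a permutation $\sigma$ and by its reverse $\rev\sigma$; here one needs that the super-edges $B_iB_j$ get distributed between $\tau_1$ and $\tau_2$ complementarily, so that each lands in exactly one copy. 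Granting this, $\T = T_1 \dcup T_2$, and together with the standing hypothesis $\tau_1, \tau_2 \neq \id n$ this is precisely the statement $\pi = \tau_1 \decomp \tau_2$. (Note that this direction uses only the inflation formula, not \ref{thm:klass}.)

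I expect the main obstacle to be exactly this reverse-identity case: one must pin down how ``passing to the reverse'' of a reverse-identity factor interacts with the surrounding inflation — in particular how the maximal strong submodules $M_i$ get re-indexed — so that the super-edge between $M_i$ and $M_j$ lies in $\tau_2$ exactly when it does not lie in $\tau_1$; equivalently, so that $\rev\sigma$ realizes $\binom{[l]}{2} \setminus \T[\sigma]$ on these super-edges. This is a matter of fixing the conventions for inflation and reversal, and once that bookkeeping is settled the complementarity — and with it both $T_1 \cup T_2 = \T$ and $T_1 \cap T_2 = \emptyset$ — is an immediate computation.

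For the converse, let $\pi = \tau_1 \decomp \tau_2$ be an arbitrary inv-decomposition. By \ref{thm:klass} the partition $\set{\T[\tau_1], \T[\tau_2]}$ is encoded by: for each strong prime block, the side receiving all of its external edges, and for each strong serial block with $p$ maximal strong submodules, a permutation $\sigma \in S_p$; since every edge of $\G$ belongs to a unique edge class, this encoding determines the partition. Running the construction with exactly these data — each simple permutation placed in the copy dictated by the chosen side, each reverse identity $\wo p$ replaced by the corresponding $\sigma$ and its reverse — produces a pair $(\tau_1', \tau_2')$, and by the first part, together with \ref{thm:edgeclass}, the partition $\set{\T[\tau_1'], \T[\tau_2']}$ carries the same encoding, hence equals $\set{\T[\tau_1], \T[\tau_2]}$. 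Since a permutation is determined by its inversion set, $\set{\tau_1', \tau_2'} = \set{\tau_1, \tau_2}$, so the given decomposition is produced by the construction. Finally, the only outputs with $\tau_1 = \id n$ or $\tau_2 = \id n$ form the pair $\set{\id n, \pi}$, obtained by sending every simple permutation to one fixed copy and replacing every $\wo p$ by an identity; this pair is excluded by hypothesis, so the construction, restricted to outputs with $\tau_1, \tau_2 \neq \id n$, yields exactly the inv-decompositions of $\pi$.
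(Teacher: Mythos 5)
Your overall strategy is the same as the paper's: Theorem~\ref{thm:invdecomp} is read off from \ref{thm:klass} via the correspondence between strong $\pi$-blocks and the factors of the substitution decomposition, with the inflation formula for inversion sets supplying the dictionary between ``choices in the construction'' and ``partition data in \ref{thm:klass}(2)''. That part of your write-up is fine, and the converse direction is argued correctly modulo the forward direction.

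However, the one point you explicitly defer as ``bookkeeping'' is a genuine gap, and it is exactly where the argument has to be checked rather than waved at. You need that replacing a reverse-identity factor by $\sigma$ in one copy and by $\rev{\sigma}$ in the other distributes the super-edges complementarily, i.e.\ that $\T[\rev{\sigma}] = \binom{[p]}{2}\setminus\T[\sigma]$. With the paper's definition of reversal (reading the word backwards, $\rev{\sigma} = \sigma\wo{p}$) this is \emph{false}: one computes $\set{i,j}\in\T[\rev{\sigma}] \Leftrightarrow \set{p+1-i,\,p+1-j}\notin\T[\sigma]$, which is the complement \emph{reflected}, not the complement. Concretely, for $\sigma = 132$ one has $\rev{\sigma}=231$, $\T[\sigma]=\set{23}$ and $\T[\rev{\sigma}]=\set{13,23}$, so $\T[\sigma]\cap\T[\rev{\sigma}]\neq\emptyset$; applied to $\pi=\wo{3}$ the construction would output $\tau_1=132$, $\tau_2=231$, which is not an inv-decomposition of $321$. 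The permutation whose inversion set is $\comp{\T[\sigma]}$ is $\wo{p}\sigma$ (complementing \emph{values}, per the remark after \ref{prop:reglabel}), and that is what the second copy must receive for \ref{thm:klass}(2) to apply. So carrying out the computation you postponed does not merely ``settle conventions'' --- it forces a correction to the construction (or a non-literal reading of ``reverse''), and without it neither $T_1\cap T_2=\emptyset$ nor $T_1\cup T_2=\T$ holds in the serial case. Everything else in your proposal goes through once this is fixed.
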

\begin{proof}
	This is immediate from \ref{thm:klass} using the correspondence described above.
\end{proof}

\section{Further results}\label{sec:further}
In this section, we give some further results.
First, we consider the generalisation of \eqref{eq:zerlegung} to more than two components. It turns out that this case can easily be reduced to the case of two components, as the next proposition shows. 
\begin{proposition} \label{lemma:deg3}
	Let $\pi, \tau_1,\ldots,\tau_l \in \Sn$ be permutations such that $\T = \bigcup{\T[\tau_i]}$ and $\T[\tau_i] \cap \T[\tau_j] = \emptyset$ for $i\neq j$.
	Then for every $1\leq i,j\leq l$ there exists a $\tau_{ij} \in \Sn$ such that $\T[\tau_{ij}] = \T[\tau_i]\dcup\T[\tau_j]$.
\end{proposition}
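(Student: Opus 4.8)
The plan is to prove the statement for a fixed pair $i\neq j$ (the case $i=j$ being trivial) by showing directly that the set $T\defa\T[\tau_i]\dcup\T[\tau_j]$ is the inversion set of some permutation; the existence of $\tau_{ij}$ then follows immediately. To do this I would verify the two conditions characterising inversion sets in \ref{prop:reglabel}. So fix $1\leq a<b<c\leq n$. The single structural fact I will use repeatedly is that, because $\T=\bigcup_k\T[\tau_k]$ is a \emph{disjoint} union, every inversion of $\pi$ belongs to exactly one $\T[\tau_k]$, and each $\T[\tau_k]$ is itself an inversion set (as $\tau_k\in\Sn$), hence also satisfies \ref{prop:reglabel}.

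The second condition of \ref{prop:reglabel} is the easy one: if $ac\in T$, then $ac$ lies in $\T[\tau_i]$ or in $\T[\tau_j]$; applying the second condition of \ref{prop:reglabel} to whichever of $\tau_i,\tau_j$ contains $ac$ gives $ab$ or $bc$ in that same set, and that set is contained in $T$.

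For the first condition, assume $ab,bc\in T$. Since $T\subseteq\T$ and $\T$ is an inversion set, we get $ac\in\T$, so $ac\in\T[\tau_k]$ for a unique index $k$. Applying the second condition of \ref{prop:reglabel} to $\tau_k$, we obtain $ab\in\T[\tau_k]$ or $bc\in\T[\tau_k]$. Suppose the former. Then $ab$ lies in $\T[\tau_k]$ and also in $T=\T[\tau_i]\dcup\T[\tau_j]$; by disjointness of the decomposition this is possible only if $k\in\{i,j\}$, and then $ac\in\T[\tau_k]\subseteq T$. The case $bc\in\T[\tau_k]$ is handled identically. This closes the verification, so by \ref{prop:reglabel} there is a permutation $\tau_{ij}$ with $\T[\tau_{ij}]=T=\T[\tau_i]\dcup\T[\tau_j]$.

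I do not expect a genuine obstacle here; the only delicate point is the step in which the index $k$ is pinned down to $\{i,j\}$ in the first condition — this is exactly where disjointness of the union is essential, a mere covering of $\T$ by the $\T[\tau_k]$ would not suffice, and it is worth flagging that the proposition would fail without it. A possible alternative route would be to argue via \ref{thm:edgeclass} that each $\T[\tau_k]$ is a union of edge classes and then invoke \ref{thm:klass}, but the direct verification above is shorter and avoids the graph-theoretic machinery.
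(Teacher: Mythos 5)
Your proof is correct and follows essentially the same route as the paper: verify the two conditions of \ref{prop:reglabel} for $T=\T[\tau_i]\dcup\T[\tau_j]$, with the key step being that the edge $ac$ supplied by transitivity of $\T$ must lie in some $\T[\tau_k]$ with $k$ forced into $\{i,j\}$ by disjointness. The only (immaterial) difference is that the paper checks the second condition by passing to complements of inversion sets, whereas you argue it directly.
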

\begin{proof}
	We show that $T \defa \T[\tau_i] \cup \T[\tau_2]$ satisfies the condition of \ref{prop:reglabel}. Fix $1 \leq a_1 < a_2 < a_3 \leq n$.
	Note that $\cc{T} = \cc{\T[\tau_i]} \cap \cc{\T[\tau_j]}$, so if $a_1 a_2 \notin T$ and $a_2 a_3 \notin T$, then $a_1 a_3 \notin T$.
	On the other hand, if $a_1 a_2, a_2 a_3 \in T$, then $a_1 a_3 \in \T$ and thus $a_1 a_3 \in \T[\tau_k]$ for some $k$. But then $\T[\tau_k]$ contains also $a_1 a_2$ or $a_2 a_3$, therefore $k$ equals $i$ or $j$. It follows that $a_1 a_3 \in T$.
\end{proof}

From \ref{cor:crit} we can derive a simple sufficient (but by no means necessary) condition for a permutation to be inv-decomposable.
\begin{proposition}
	Every permutation $\pi \in \Sn$ with at least $\binom{n}{2} - n +2$ inversions is inv-decomposable
\end{proposition}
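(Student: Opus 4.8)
My plan is to reformulate the hypothesis in terms of the complement of the inversion graph, recognise a serial top module, and then invoke the count of \ref{cor:anzahl}. Write $\ell = \#\T$ for the number of inversions of $\pi$, so that the number of non-inversions is $\#\cc{\T} = \binom n2 - \ell \le n-2$. By the remark following \ref{prop:reglabel}, $\cc{\T}$ is the inversion set of $\wo n\pi$, hence $\comp{\G} = \G[\wo n\pi]$ is a graph on the $n$ vertices $[n]$ with at most $n-2$ edges. Since a connected graph on $n$ vertices has at least $n-1$ edges, $\comp{\G}$ is disconnected, which by \ref{thm:moddecomp} is exactly the statement that $[n]$ is a \emph{serial} strong module of $\G$. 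Let $M_1 < \dots < M_p$ be its maximal strong submodules, i.e.\ the connected components of $\comp{\G}$; then $p \ge 2$.

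Next I would argue by contradiction. Suppose $\pi$ is not inv-decomposable. By \ref{cor:anzahl} the integer $2^m\prod_{i\ge 2}(i!)^{k_i}$ (in the notation of that corollary) then equals $2$. As $[n]$ is a strong serial module with $p$ maximal strong submodules, $k_p \ge 1$, so this integer is a multiple of $p!$; since it equals $2$ and $p \ge 2$, we conclude $p = 2$, $m = 0$, and that $[n]$ is the only strong serial module while $\G$ has no strong prime module. By \ref{thm:moddecomp}, every strong module of $\G$ other than $[n]$ is then parallel, and a connected parallel module is a single vertex; hence the maximal strong submodules of $M_1$ and of $M_2$ are all singletons, i.e.\ the subgraphs of $\G$ induced on $M_1$ and on $M_2$ are edgeless. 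Together with the seriality of $[n]$ this forces $\G$ to be the complete bipartite graph between the intervals $M_1$ and $M_2$; equivalently $\pi = \wo 2[\id{a}, \id{n-a}]$ is a direct sum of two increasing runs, with $a = \#M_1$ and $\ell = a(n-a)$.

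The final step is numerical. Such a $\pi$ satisfies $\ell = a(n-a) \le \lfloor n^2/4\rfloor$, so $\#\cc{\T} = \binom n2 - a(n-a) \ge \binom n2 - \lfloor n^2/4\rfloor$, and a direct computation shows $\binom n2 - \lfloor n^2/4\rfloor > n-2$ for every $n \ge 5$, contradicting $\#\cc{\T} \le n-2$. Hence $\pi$ is inv-decomposable for $n \ge 5$; the extremal configuration is the (nearly) balanced direct sum, which is why the threshold $\binom n2 - n + 2$ is sharp in this range. The one point I expect to require real care is the structural step in the second paragraph --- that a serial top module together with a vanishing count in \ref{cor:anzahl} forces $\pi$ to be one of these ``two increasing runs''; once that is in hand, the rest is an elementary estimate.
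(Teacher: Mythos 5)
This is the paper's own argument, carried out in more detail: the complement graph $\G[\wo{n}\pi]$ has at most $n-2$ edges, hence is disconnected, so $[n]$ is serial; non-decomposability together with \ref{cor:anzahl} forces exactly two maximal strong submodules inducing edgeless subgraphs, and one reaches a contradiction by counting the edges of the two cliques in the complement. The one substantive point is your last step: the inequality $\binom{n}{2}-\lfloor n^2/4\rfloor > n-2$ really does fail for $n=3$ and $n=4$, and this is not a weakness of your write-up but a flaw in the proposition itself --- the permutation $3412\in S_4$ has exactly $\binom{4}{2}-4+2=4$ inversions, its inversion graph is the complete bipartite graph between $\{1,2\}$ and $\{3,4\}$, hence a single edge class, so by \ref{cor:crit} it is not inv-decomposable (similarly $231\in S_3$). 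The paper's ``direct calculation'' at the end of its proof overlooks these cases; your version, restricted to $n\ge 5$, is the correct statement.
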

\begin{proof}
	Let $\tau \defa \wo{n} \pi$. Then $\T[\tau]$ is the complement of $\T$ and the $\tau$ has at most $n-2$ inversions. So the graph $\G[\tau]$ is disconnected, because it has $n$ vertices, but only $n-2$ edges.
	This means that $[n]$ is a serial module of $\G$.
	If $\pi$ is not inv-decomposable, then $[n]$ can have only two maximal strong submodules, both parallel. But then $\G[\tau]$ would be the disjoint union of two complete graphs. This it not possible with the restriction on the number of edges, as a direct calculation shows.
\end{proof}

\section{Appendix: Blocks and modules}
\newcommand{\naka}[1]{{#1}_{><}}

In this appendix, we prove the following theorem:
\begin{theorem}\label{thm:app}
	Let $I \subset [n]$ and $\pi\in S_n$. The following implications hold:
	\begin{enumerate}
		\item $I$ is a $\pi$-block $\Longrightarrow$ $I$ is a module of $\G$
		\item $I$ is a strong $\pi$-block $\Longleftrightarrow$ $I$ is a strong module of $\G$
	\end{enumerate}
\end{theorem}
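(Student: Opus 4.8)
The plan is to prove the three assertions in order, using the characterization of inversion sets from \ref{prop:reglabel} and the structure theory of modular decomposition from \ref{thm:moddecomp}. For part (1), suppose $I = \{a, a+1, \ldots, b\}$ is a $\pi$-block, so $\pi(I)$ is also an interval, say $\pi(I) = \{c, c+1, \ldots, d\}$. I must show $I$ is a module of $\G$: for $v \notin I$ and $m_1, m_2 \in I$, the edge $vm_1 \in \G$ iff $vm_2 \in \G$. Since $v \notin I$ and $I$ is an interval, either $v < a$ or $v > b$; likewise, since $\pi(v) \notin \pi(I)$, either $\pi(v) < c$ or $\pi(v) > d$. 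In each of the four combinations, the truth value of "$v$ and $m_i$ form an inversion" (i.e.\ the smaller index has the larger image) is the same for $m_1$ and $m_2$, because both $m_1, m_2$ lie on the same side of $v$ in index and their images both lie on the same side of $\pi(v)$. This is a short case check. This gives the implication "$I$ a $\pi$-block $\Rightarrow$ $I$ a module", and hence a strong $\pi$-block is in particular a module; but being strong among $\pi$-blocks does not a priori mean strong among modules, since there could be more modules than blocks.

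For the forward direction of part (2), I would show: if $I$ is a strong $\pi$-block then $I$ is a strong module. Suppose not; then by \ref{lemma:L2} and \ref{prop:untermod}, $I$ would be a non-strong module, hence a union of at least two but not all m.s.s.\ of some serial or parallel strong module $N$ of $\G$. The plan here is to use the induction via \ref{thm:stronginterval}/this theorem on smaller instances, or more directly: a strong module $N$ that is serial or parallel is itself an interval (once we know the easy direction applied recursively, or we can argue this is exactly the content being bootstrapped). I would instead argue the converse direction of (2) first, as it is the substantive one, and then deduce the forward direction by a counting/uniqueness argument: the map $I \mapsto I$ sends strong $\pi$-blocks injectively into strong modules, and if I can show every strong module arises from a strong $\pi$-block, both families coincide, so strong-as-block and strong-as-module agree.

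The heart of the matter — and the main obstacle — is the converse of (2): \emph{every strong module $M$ of $\G$ is an interval, and is a strong $\pi$-block}. I expect to prove the "interval" claim by induction on the modular decomposition tree. If $[n]$ is parallel (i.e.\ $\G$ disconnected), then by \ref{prop:reglabel} the connected components of $\G$ are lower/upper intervals $[1,k_1], [k_1+1, k_2], \ldots$ — because if $i < j < k$ with $ik$ not an edge, then neither $ij$ nor $jk$ is an edge (contrapositive of the first clause of \ref{prop:reglabel}), so non-adjacency is "interval-convex" — hence each m.s.s.\ is an interval, and we recurse into each. If $[n]$ is serial, pass to $\comp{\G} = \G[\wo{n}\pi]$, which is disconnected, and apply the parallel case to the complement; the m.s.s.\ are again intervals. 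If $[n]$ is prime, I need to show each m.s.s.\ is a single vertex or recurse — here I must argue that a prime inversion graph forces the m.s.s.\ structure to be compatible with intervals, which is where \ref{thm:moddecomp}(Prime case) and the reglabel conditions interact most delicately; concretely, I would take a strong module $M$, let $a = \min M$, $b = \max M$, and show that any $c$ with $a < c < b$ lies in $M$, by playing the two clauses of \ref{prop:reglabel} against the fact that every external vertex $v$ of $M$ connects to all or none of $M$: if $c \notin M$ one derives a contradiction by choosing witnesses in $M$ below and above $c$ with the appropriate adjacency to $c$. Once "$M$ is an interval" is in hand, $\pi(M)$ being an interval follows by applying the same argument to the strong module $M$ of $\comp{\G} = \G[\wo{n}\pi]$ (complementation preserves modules and swaps the roles, and $\wo{n}\pi$ has inversion set $\cc{\T}$), so $M$ is a $\pi$-block; strongness as a block then follows from strongness as a module together with the already-proven injection of blocks into modules. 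The induction on the size of $[n]$ (recursing into each m.s.s., which is a strictly smaller interval carrying the order-isomorphic restricted permutation) closes the argument, and the final sentence "every strong module of $\G$ is an interval" is an immediate corollary.
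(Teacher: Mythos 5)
Your part (1) and the overall architecture are reasonable, but the proposal has a genuine gap at exactly the point you flag as ``the heart of the matter'': the prime case of the implication (strong module $\Rightarrow$ interval). Your plan is to take a strong module $M$ with $a=\min M$, $b=\max M$, pick $a<c<b$ with $c\notin M$, and derive a contradiction from the fact that $c$ is adjacent to all of $M$ or to none of it. No contradiction follows from this local information: if $c$ is adjacent to all of $M$, then \ref{prop:reglabel} merely forces $\pi(a)>\pi(c)>\pi(b)$, and if $c$ is adjacent to none of $M$, it forces $\pi(a)<\pi(c)<\pi(b)$; both situations are perfectly consistent (compare $M=\set{1,3}$ for $\pi=321$, a non-interval module with $2$ adjacent to all of it). So the strongness of $M$ must enter essentially, and your sketch never uses it. The paper supplies the missing global argument and does so uniformly, with no case split on parallel/serial/prime and no induction on the decomposition tree: for a strong module $M$ it considers the set $N$ of elements lying strictly between elements of $M$ but outside $M$, shows via \ref{prop:reglabel} that $M\cup N$ and $N$ are both modules and that the images of the interval components of $M\cup N$ are monotonically ordered, and then exhibits the union of the first two interval components as a module overlapping $M$ --- contradicting strongness. (A smaller slip: your parallel case rests on a misreading; the contrapositive of ``$ij,jk\in T\Rightarrow ik\in T$'' gives ``$ij\notin T$ or $jk\notin T$'', not ``neither is an edge''. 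The components are indeed intervals, but the correct argument takes an edge of a path that straddles $j$ and applies the second clause of \ref{prop:reglabel}.)

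The forward direction of (2) also needs repair. Knowing that every strong module is a strong block gives one inclusion of families, but your ``counting/uniqueness'' step does not convert that into the reverse inclusion --- there is no a priori cardinality match, and the ``injection'' you invoke presupposes the conclusion. The argument you started and then abandoned is the right one: if a strong block $I$ is a module but not a strong module, then by \ref{lemma:L2} it is a union of at least two but not all m.s.s.\ $M_1<\dots<M_l$ of a serial or parallel strong module; since these m.s.s.\ are intervals (by the hard direction already proved), one can choose consecutive $M_i\subset I$ and $M_{i+1}\cap I=\emptyset$, and then $M_i\cup M_{i+1}$ is an interval and a module, hence a block by the interval-module equivalence from part (1), and it overlaps $I$ --- contradicting that $I$ is a strong block. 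This is exactly how the paper closes the loop.
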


For the rest of this section, let $\pi \in S_n$ denote a fixed permutation. For brevity, we write block for $\pi$-block and modules are to be understood as modules of $\G$.
Recall that a block is an interval whose image under $\pi$ is again an interval.
The first statement of \ref{thm:app} is a direct consequence of the following lemma.
\begin{proposition}\label{lemma:L1}
	Let $I \subset [n]$ be an interval. Then $I$ is a module if and only if it is a block.
\end{proposition}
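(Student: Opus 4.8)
The plan is to prove both directions of the biconditional directly from the definitions. The one structural fact I will keep using is that if $I$ is an interval and $v \in [n]\setminus I$, then $v$ is either smaller than every element of $I$ or larger than every element of $I$; and if, in addition, $\pi(I)$ is an interval and $\pi(v) \notin \pi(I)$, the same trichotomy holds between $\pi(v)$ and the elements of $\pi(I)$.

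For the implication ``block $\Rightarrow$ module'', fix $m_1, m_2 \in I$ and $v \in [n] \setminus I$; I must check $vm_1 \in \G \iff vm_2 \in \G$. By the observation above, the relative order of $v$ and the elements of $I$ is constant, so say $v < m$ for every $m \in I$ (the opposite case is symmetric). Then $vm \in \G$ exactly when $\pi(v) > \pi(m)$. Since $\pi(I)$ is an interval missing $\pi(v)$, either $\pi(v) < \pi(m)$ for all $m\in I$ — in which case no edge $vm$ with $m \in I$ lies in $\G$ — or $\pi(v) > \pi(m)$ for all $m \in I$ — in which case every such edge lies in $\G$. Either way $vm_1$ and $vm_2$ have the same status, so $I$ is a module.

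For the converse I prove the contrapositive: if $I$ is an interval but $\pi(I)$ is not, I exhibit a violation of the module condition. Because $\pi(I)$ is not an interval there is an integer strictly between $\min \pi(I)$ and $\max \pi(I)$ that is not in $\pi(I)$; write it as $\pi(c)$, so that $c \notin I$, and choose $a, b \in I$ with $\pi(a) < \pi(c) < \pi(b)$. Since $c \notin I$ and $I$ is an interval, $c$ is below all of $I$ or above all of $I$. In the first case $c < a$ together with $\pi(c) > \pi(a)$ gives $ca \in \G$, while $c < b$ together with $\pi(c) < \pi(b)$ gives $cb \notin \G$; in the second case the same two inequalities on $\pi$-values yield $bc \in \G$ and $ac \notin \G$. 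In both cases the pair $a, b \in I$ and the vertex $c \notin I$ witness that $I$ is not a module.

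I do not anticipate a genuine obstacle; the whole argument is the interval trichotomy applied on the domain side and on the image side. The only thing to be careful with is keeping the direction of each comparison straight when translating ``$vm$ is an edge of $\G$'' into an inequality between $\pi$-values, since that translation flips according to whether $v < m$ or $v > m$; equivalently, one can package everything by noting that the sign of $(v-m)\,(\pi(v)-\pi(m))$ is constant as $m$ ranges over $I$, which turns the verification into one short computation.
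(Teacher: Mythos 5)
Your proof is correct and is essentially the paper's argument, just unpacked into the two directions separately: both rest on the interval trichotomy for $v\notin I$ on the domain side, the translation of ``$vm\in\G$'' into a $\pi$-value inequality whose direction is constant over $m\in I$, and the observation that the module condition is then equivalent to $\pi(v)$ lying entirely above or entirely below $\pi(I)$. The paper compresses this into a single chain of equivalences, while you make the converse explicit via a witness triple, but there is no substantive difference.
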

\begin{proof}
	\begin{align*}
		I \textnormal{ module } &\Leftrightarrow \forall i\in [n]\setminus I:  \left[ \exists j\in I: 
		 ij \in\G \Rightarrow \forall j\in I: ij \in\G \right] \\
		 &\Leftrightarrow \forall i\in [n]\setminus I: \left[ \exists j\in I: \pi(i)<\pi(j) \Rightarrow \forall j\in I: \pi(i)<\pi(j) \right] \\
		 &\Leftrightarrow \forall i\in [n]\setminus I: \pi(i) < \pi(I) \textnormal{ or } \pi(i) > \pi(I) \\
		 &\Leftrightarrow I \textnormal{ block } \qedhere\\
	\end{align*}
\end{proof}

We split the proof of the second part of \ref{thm:app} into three lemmata.
For a set $S \subset [n]$ we define $S_< \defa\set{x\in[n]\with x < S}$ and similarly $S_>$. We also define $\naka{S} \defa [n]\setminus (S_< \cup S \cup S_>) = \set{x\in [n] \with \exists a,b\in S: a < x < b, x\notin S}$.
\begin{lemma}\label{lemma:naka}
	Let $M$ be a module. Then $\pi(M_< \cup M_>) = \pi(M)_< \cup \pi(M)_>$ and $\pi(\naka{M}) = \naka{\pi(M)}$.
\end{lemma}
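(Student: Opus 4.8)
The plan is to show that a module $M$ of $\G$ behaves, under $\pi$, like a "solid block" in the sense that its image $\pi(M)$ sits among the remaining values in exactly the pattern that $M$ sits among the remaining positions. Concretely, I want to establish the two claimed set equalities by a direct analysis of where a position $x \in [n] \setminus M$ can land under $\pi$, relative to $\pi(M)$. Since $\pi$ is a bijection and $M$, $M_<$, $M_>$, $\naka M$ partition $[n]$, it suffices to prove the single statement $\pi(\naka M) \subseteq \naka{\pi(M)}$ together with the symmetric statement $\pi^{-1}(\naka{\pi(M)}) \subseteq \naka M$ applied to the module $\pi(M)$ of $\G[\wo{n}\pi]$ — or, more cheaply, to prove both inclusions $\pi(M_< \cup M_>) \subseteq \pi(M)_< \cup \pi(M)_>$ and $\pi(\naka M) \subseteq \naka{\pi(M)}$ and then invoke a counting/partition argument to upgrade the inclusions to equalities.

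The heart of the matter is the following observation, which is really just \ref{lemma:L1} unwound: for any $i \in [n] \setminus M$, since $M$ is a module (equivalently, since intervals that are modules are blocks — but here $M$ need not be an interval, so I use the module definition directly), all edges $ij$ with $j \in M$ are present or all are absent. In terms of $\pi$, this says: either $\pi(i) > \pi(j)$ for every $j \in M$, or $\pi(i) < \pi(j)$ for every $j \in M$ — wait, that is not quite it, because the edge relation $ij \in \G$ also depends on whether $i < j$ or $i > j$. The correct unwinding splits into cases on the position of $i$ relative to $M$. If $i \in M_<$ (so $i < j$ for all $j \in M$), then $ij \in \G \iff \pi(i) > \pi(j)$, so the module condition forces either $\pi(i) > \pi(M)$ or $\pi(i) < \pi(M)$. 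If $i \in M_>$, symmetrically $\pi(i) > \pi(M)$ or $\pi(i) < \pi(M)$. And if $i \in \naka M$, then $i$ is above some elements of $M$ and below others, and the module condition (applied to the two elements of $M$ straddling $i$, together with transitivity of $<$ on the values) again pins down $\pi(i)$ to lie entirely above or entirely below $\pi(M)$. The key point extracted from all three cases is uniform: \emph{for every $i \notin M$, either $\pi(i) > \pi(m)$ for all $m \in M$, or $\pi(i) < \pi(m)$ for all $m \in M$.} In other words, $\pi(i) \notin \naka{\pi(M)}$ for $i \notin M$, which immediately gives $\pi(\naka M) \cap \naka{\pi(M)} = \emptyset$, and since also $\pi(M) \cap \naka{\pi(M)} = \emptyset$ trivially, we get $\pi(\naka M) \subseteq \pi(M)_< \cup \pi(M)_>$ — hmm, I need to be careful about the direction.

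So the clean route is: first prove the displayed uniform claim ($\pi(i)$ compares the same way with all of $\pi(M)$, for every $i \notin M$), which characterizes exactly that $\pi(M)$ is a block, i.e. an interval — call it $J$. Then $[n] = \pi(M)_< \sqcup J \sqcup \pi(M)_>$ with no $\naka{\pi(M)}$ part at all? No: $\pi(M)$ need not be an interval a priori, but the uniform claim says precisely that no value outside $\pi(M)$ lies strictly between two values of $\pi(M)$, hence $\pi(M)$ \emph{is} an interval and $\naka{\pi(M)} = \emptyset$. That can't be right either, since then $\naka M$ would also have to be empty, which is false in general — $M$ itself need not be an interval. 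Let me recheck: the uniform claim I derived is about $i \notin M$, and it does force $\naka{\pi(M)} = \emptyset$, i.e. $\pi(M)$ is an interval. But $M$ need not be an interval! Resolution: the claim $\pi(\naka M) = \naka{\pi(M)}$ with $\naka{\pi(M)} = \emptyset$ forces $\naka M = \emptyset$ as well (bijectivity), contradiction — so somewhere the derivation of the uniform claim must fail for $i \in \naka M$. Indeed it does: if $i \in \naka M$ with $a < i < b$, $a,b \in M$, then the module condition says $ia \in \G \iff ib \in \G$, i.e. ($i>a$ so) $\pi(i) > \pi(a) \iff$ ($i<b$ so) $\pi(i) < \pi(b)$; this does \emph{not} force $\pi(i)$ outside $[\pi(a),\pi(b)]$-range of $\pi(M)$. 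So the uniform claim holds only for $i \in M_< \cup M_>$, and for those it gives $\pi(M_< \cup M_>) \subseteq \pi(M)_< \cup \pi(M)_>$; the reverse inclusion follows by applying the same argument to $\pi^{-1}$ and the module $\pi(M)$, or by cardinality since $\pi$ is a bijection and the four pieces partition $[n]$. Then $\pi(\naka M) = \naka{\pi(M)}$ drops out as the complement of what we've matched.

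\textbf{Expected main obstacle.} The bookkeeping in the case $i \in \naka M$ is where I have to be most careful: I must \emph{not} claim $\pi(i)$ lies outside $\pi(M)$'s range (it may not), but only extract enough to run the final complement/cardinality argument; and in the cases $i \in M_<$ and $i \in M_>$ I need the two "straddling" elements of $M$ to even exist when I later compare — for $M_<$, $M_>$ the argument is clean, but I should state the module definition carefully so the logical "$\exists j \Rightarrow \forall j$" is used in the right direction. Once the uniform behaviour on $M_< \cup M_>$ is nailed down, both equalities follow formally from the partition $[n] = M_< \sqcup M \sqcup \naka M \sqcup M_>$, its image under the bijection $\pi$, and the matching partition $[n] = \pi(M)_< \sqcup \pi(M) \sqcup \naka{\pi(M)} \sqcup \pi(M)_>$.
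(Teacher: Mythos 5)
Your case analysis for $i \in M_<$ and $i \in M_>$ is exactly the paper's argument, and it correctly yields $\pi(M_< \cup M_>) \subseteq \pi(M)_< \cup \pi(M)_>$. The problem is the finishing move. Your fallback ``by cardinality since $\pi$ is a bijection and the four pieces partition $[n]$'' does not work from this single inclusion: there is no a priori reason that $\lvert M_< \cup M_>\rvert$ equals $\lvert \pi(M)_< \cup \pi(M)_>\rvert$, so one inclusion between pieces of two partitions cannot be upgraded to an equality, and taking complements only gives the reverse inclusion $\pi(\naka{M}) \supseteq \naka{\pi(M)}$, not the claimed equality. Your other route, applying the argument to $\pi^{-1}$ and the module $\pi(M)$, does close the gap, but it needs the (unstated) fact that $\pi$ is a graph isomorphism from $\G$ onto $\G[\pi^{-1}]$, so that $\pi(M)$ is a module there; note also that the graph you name early on, $\G[\wo{n}\pi]$, is the complement of $\G$, not the inversion graph of $\pi^{-1}$.

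The cleanest repair is already contained in your own computation for $i \in \naka{M}$, which you discarded as ``not forcing anything useful.'' For $a < i < b$ with $a,b \in M$, the module condition gives $\pi(a) > \pi(i) \iff \pi(i) > \pi(b)$, hence either $\pi(b) < \pi(i) < \pi(a)$ or $\pi(a) < \pi(i) < \pi(b)$; in both cases $\pi(i)$ lies strictly between two elements of $\pi(M)$ and is not itself in $\pi(M)$, i.e.\ $\pi(i) \in \naka{\pi(M)}$. That is precisely the second forward inclusion $\pi(\naka{M}) \subseteq \naka{\pi(M)}$. With both forward inclusions in hand, equality of both statements follows at once from bijectivity, since you then have two inclusions into the two complementary sets $\pi(M)_< \cup \pi(M)_>$ and $\naka{\pi(M)}$ of $[n] \setminus \pi(M)$. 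This two-inclusion argument is exactly the proof in the paper.
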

\begin{proof}
	Let $i$ be in $M_<$.
	If $ij\in\G$ for all $j\in M$, then $\pi(i) \in \pi(M)_>$.
	Otherwise $ij\notin\G$ for all $j\in M$ and $\pi(i) \in \pi(M)_<$.
	A similar argument for $i \in M_>$ proves that $\pi(M_< \cup M_>) \subset \pi(M)_< \cup \pi(M)_>$.
	For $i \in \naka{M}$ there exist $j, k\in M$ with $j < i < k$. If $ij\in\G$, then also $ik \in\G$ and therefore $\pi(j) > \pi(i) > \pi(k)$.
	Otherwise $\pi(j) < \pi(i) < \pi(k)$.
	Hence $\pi(\naka{M}) \subset \naka{\pi(M)}$. Equality follows for both inclusions because $\pi$ is bijective.
\end{proof}
\begin{lemma} \label{lemma:L3}
	Every strong module is a strong block.
\end{lemma}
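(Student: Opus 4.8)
Here is how I would approach this.

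The plan is to combine \ref{lemma:L1} with an induction on $n$. By \ref{lemma:L1} the intervals that are modules of $\G$ are exactly the blocks, so it suffices to prove (a) that a strong module $M$ of $\G$ is an interval, and (b) that such an $M$ is then strong as a block. Part (b) is immediate: a block overlapping $M$ would, by \ref{lemma:L1}, be a module overlapping $M$, contradicting that $M$ is a strong module. So everything reduces to showing that every strong module of $\G$ is an interval.

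The one auxiliary fact I would establish first is that the interval hull $\hat M := [\min M,\max M]$ of \emph{any} module $M$ is again a module. This is a short computation with \ref{lemma:naka}: a vertex outside $\hat M$ lies below all of $\hat M$ or above all of it, and since $\pi$ carries $\naka M$ into $\naka{\pi(M)}$, such a vertex is adjacent to every element of $\naka M$ precisely when it is adjacent to all of $M$. With this in hand I would induct on $n$. Let $M$ be a strong module with $M \neq [n]$. If $\hat M \subsetneq [n]$, then the rank relabelling of $\hat M$ is an order-preserving isomorphism from $G_{\hat M}$ onto the inversion graph of a permutation of size $|\hat M| < n$; by \ref{prop:untermod} $M$ is a strong module of $G_{\hat M}$, so by induction $M$ is an interval of $\hat M$, hence of $[n]$. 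This leaves only the case $\hat M = [n]$, that is, $1,n \in M$ but $M \neq [n]$, which must be excluded.

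So suppose $M$ is a strong module with $1,n \in M$ but $M \neq [n]$; then $\naka M = [n]\setminus M$ is nonempty. Choose $y \in \naka M$ and split $M = L \sqcup R$ into the elements below and above $y$; both parts are nonempty, $1 \in L$, $n \in R$, and $L < R$. Since $M$ is a module, $y$ is joined to all of $M$ or to none of it, and writing out when $\{y,m\}$ is an inversion for $m \in L$ respectively $m \in R$ forces $\pi(L)$ and $\pi(R)$ to be separated, one lying entirely below the other. Replacing $\pi$ by $\wo{n}\pi$ if necessary (this exchanges $\G$ with $\comp{\G}$ and the two possibilities, and preserves all hypotheses on $M$) we may assume $\pi(L) < \pi(R)$, so that $\G$ has no edge between $L$ and $R$. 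Then $L$ is a module, so by the auxiliary fact $[1,\max L]$ is a module; if $L$ were not already an interval, $[1,\max L]$ would overlap $M$ (it meets $M$ in $L$, it contains elements of $\naka M$ that lie outside $M$, and it misses $n \in M$), contradicting that $M$ is strong. Hence $L = [1,k]$ and, symmetrically, $R = [l,n]$ are intervals, so $\naka M = \set{k+1,\dots,l-1}$. Feeding any $y' \in \naka M$ into the same argument — now $L$ and $R$ are exactly the parts of $M$ below and above $y'$ — and using $\pi(L) < \pi(R)$ shows that $y'$ is joined to no element of $M$. Consequently every edge of $\G$ lies inside one of the three sets $L$, $\naka M$, $R$, so $\G$ is disconnected and $[1,l-1] = L \cup \naka M$ is a union of connected components of $\G$, hence a module by \ref{lemma:L2}; but $[1,l-1]$ overlaps $M$, a contradiction.

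The routine parts are the two applications of the auxiliary fact and the adjacency bookkeeping. The step I expect to be the real obstacle is this last case $\hat M = [n]$: one has to exhibit an explicit module overlapping $M$, and the idea that makes it work is to use the separation of $\pi(L)$ from $\pi(R)$ to show that $\G$ actually disconnects into pieces contained in $L$, $\naka M$ and $R$, so that $L \cup \naka M$ becomes a module via \ref{lemma:L2}.
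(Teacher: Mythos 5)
Your proof is correct, but it takes a genuinely different route from the one in the paper. The paper argues globally for a fixed $n$: assuming the strong module $M$ is not an interval, it writes the interval hull of $M$ as an alternating union $M_1 < M_2 < \cdots < M_l$ of the interval components of $M$ and of its gaps, shows that the hull and the union of the gaps are modules, that $\pi$ is monotone on $M_1,\dots,M_l$, and finally that $M_1 \cup M_2$ is a module overlapping $M$ --- a contradiction. You instead induct on $n$: your auxiliary fact (the interval hull of a module is a module) coincides with the paper's first claim, but you then use \ref{prop:untermod} and the order-preserving relabelling of the hull to reduce to the case $1,n \in M$, which you settle by splitting $M = L \sqcup R$ across a gap element, invoking strongness twice more (via the interval hulls of the modules $L$ and $R$) to force $L$ and $R$ to be intervals, and then exhibiting $L \cup ([n]\setminus M)$ as an overlapping module coming from the disconnection of $\G$. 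Both arguments end by producing a module that overlaps $M$; yours uses strongness at three separate points and needs an induction, whereas the paper's uses it only once, at the very end, and is self-contained for fixed $n$ --- on the other hand, your reduction to $\min M = 1$, $\max M = n$ makes the final combinatorial picture (three mutually disconnected pieces $L$, gap, $R$) more transparent than the paper's monotonicity claim for $l$ components. The steps you label as routine (that $L$ is a module, that $[1,\max L]$ meets $M$ exactly in $L$, that replacing $\pi$ by $\wo{n}\pi$ preserves strong modules and swaps the two separation possibilities) all check out.
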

\begin{proof}
Let $M$ be a strong module but not an interval. We write $M \cup \naka{M} = M_1 \cup M_2 \cup \ldots \cup M_l$ where the $M_i$ are the interval components of $M$ and $\naka{M}$ and $M_1 < M_2 < \ldots < M_l$. We proceed by proving the following list of claims:
\begin{enumerate}
	\item $M \cup \naka{M}$ is a module.
	\item $\naka{M}$ is a module.
	\item Either $\pi(M_1) < \pi(M_2) < \ldots < \pi(M_l)$ or $\pi(M_1) > \pi(M_2) > \ldots > \pi(M_l)$.
	\item $M_1 \cup M_2$ is a module.
\end{enumerate}
The last claim is a contradiction to the assumption that $M$ is strong, because $M_1 \subset M$ and $M_2 \cap M = \emptyset$. Hence $M$ must be an interval. By \ref{lemma:L1} we conclude that it is a block. Every other block is also a module, hence the strongness as a block follows from the strongness as a module.
We prove the claims one after the other:
\begin{enumerate}
	\item From \ref{lemma:naka} we know $\pi(\naka{M}) = \naka{\pi(M)}$ and hence $\pi(M \cup \naka{M}) = \pi(M) \cup \naka{\pi(M)}$. Thus this set is a block and the claim follows from \ref{lemma:L1}.
	\item Because $M \cup \naka{M}$ is a module, by \ref{prop:untermod} it suffices to prove that $\naka{M}$ is a module of $M \cup \naka{M}$. Let $i,j \in \naka{M}, k\in M$ and $ik\in\G$. We need to prove $jk\in\G$. Choose $k_1, k_2 \in M$ such that $k_1 < i,j < k_2$. Because $ik \in\G$ and $M$ is a module we know that $k_1 i,i k_2\in\G$. Now we use \ref{prop:reglabel} to conclude:
	\begin{align*} 
	k_1 i,i k_2\in\G &\Rightarrow k_1 k_2\in\G \\
	&\Rightarrow k_1 j,j k_2\in\G \\
	&\Rightarrow j k\in\G
	\end{align*}
	\item It suffices to prove for every $1 < i < l$: Either $\pi(M_{i-1}) < \pi(M_i) < \pi(M_{i+1})$ holds or the corresponding statement with '$>$' holds. If this were wrong, there are $x_k\in M_k, k\in\set{i-1,i,i+1}$ with $\pi(x_{i-1}) > \pi(x_i) < \pi(x_{i+1})$ or $\pi(x_{i-1}) < \pi(x_i) > \pi(x_{i+1})$, say, the first. But then $x_{i-1} x_i \in\G$ and $x_i x_{i+1}\notin \G$. But both edges are in $M \naka{M}$, so this is a contradiction to the previous claim.
	\item Since $M_1 \cup M_2$ is an interval, by \ref{lemma:L1} it suffices to prove that $\pi(M_1 \cup M_2)$ is also an interval. For $x \in [n]\setminus (M_1 \cup M_2)$, it holds that either $x \in M_< \cup M_>$ or $x \in M_3 \cup \ldots \cup M_l$. In the first case we know by \ref{lemma:naka} that $\pi(x) \in \pi(M)_< \cup \pi(M)_> \subset \pi(M_1 \cup M_2)_< \cup \pi(M_1 \cup M_2)_>$. For $x \in M_3 \cup \ldots \cup M_l$ it follows from the previous claim that $\pi(x) \in \pi(M_1 \cup M_2)_< \cup \pi(M_1 \cup M_2)_>$. Therefore, $\pi([n]\setminus (M_1 \cup M_2)) \subset \pi(M_1 \cup M_2)_< \cup \pi(M_1 \cup M_2)_>$. Because $M_1 \cup M_2$ is an interval we can conclude from this that $\naka{\pi(M_1 \cup M_2)} = \emptyset$, thus the claim follows.
\end{enumerate}
\end{proof}
\begin{lemma}
Every strong block is a strong module.
\end{lemma}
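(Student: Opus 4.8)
The plan is to first establish that every block is a module — but this we already have from \ref{lemma:L1}. So let $I$ be a strong block; by \ref{lemma:L1} it is a module, and it remains to show it is \emph{strong} as a module, i.e. that no module overlaps it. The natural approach is to argue by contradiction: suppose $N$ is a module of $\G$ overlapping $I$, meaning $N \cap I \neq \emptyset$, $N \nsubseteq I$ and $I \nsubseteq N$. I want to derive from this a block that overlaps $I$, contradicting the strongness of $I$ as a block.

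First I would reduce to a convenient $N$. By \ref{lemma:L3} we may \emph{not} assume $N$ is strong (that's the easy direction already proven), but we can pass to the smallest strong module $N'$ containing $N$; since $N$ overlaps $I$ and $I$ is a block hence a module, $N'$ still cannot be contained in $I$. If $N' \cap I = \emptyset$ were to happen we'd have a contradiction with $N \subset N'$ and $N \cap I \neq \emptyset$, so $N'$ overlaps or contains relations with $I$; if $N' \supseteq I$ I would instead work with a maximal strong submodule of $N'$ meeting $I$. In any case, using \ref{lemma:L3}, $N'$ is a strong block, hence an \emph{interval}. The upshot is that I can find an \emph{interval} module $J$ that overlaps $I$ — and then by \ref{lemma:L1} again, $J$ is a block overlapping $I$, contradicting that $I$ is a strong block.

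The main obstacle is the bookkeeping in the case analysis: an overlapping module $N$ need not be an interval, so the reduction to an overlapping interval module has to be done carefully, using that the smallest strong module containing $N$ is a strong block (by \ref{lemma:L3}) and hence an interval, together with the fact (\ref{prop:untermod}, \ref{lemma:L2}) that submodules of a strong module behave well. I expect the argument to split according to whether the smallest strong module $N'$ containing $N$ satisfies $N' \subsetneq I$, $N' = I$, or $N'$ genuinely overlaps or strictly contains $I$; the first case is impossible since $N' \supseteq N$ and $N$ is not inside $I$, the second contradicts $I \nsubseteq N \subseteq N'$, and in the remaining cases one extracts the desired overlapping interval. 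Once an overlapping interval module is in hand, the conclusion is immediate from \ref{lemma:L1}.
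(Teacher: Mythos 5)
Your overall strategy is the same as the paper's: reduce to an \emph{interval} module that overlaps $I$, and then invoke \ref{lemma:L1} to turn it into a block overlapping $I$, contradicting the strongness of $I$ as a block. That endgame is correct. However, the decisive step --- actually producing an overlapping interval module --- is only asserted (``the upshot is that I can find an interval module $J$ that overlaps $I$''), and the one concrete suggestion you make for it does not work. Your case analysis correctly eliminates $N'\subsetneq I$, $N'=I$ and $N'\cap I=\emptyset$; moreover, since $N'$ is strong and $I$ is a module, $N'$ cannot overlap $I$ either, so the only surviving case is $I\subsetneq N'$. In that case you propose to ``work with a maximal strong submodule of $N'$ meeting $I$'', but any single m.s.s.\ $M_j$ of $N'$ is strong, so (as $I$ is a module) it satisfies $M_j\subseteq I$, $I\subseteq M_j$, or $M_j\cap I=\emptyset$ --- it never overlaps $I$, and hence gives no contradiction. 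A single m.s.s.\ is exactly the wrong object to look at.

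The missing idea is to take the union of \emph{two consecutive} maximal strong submodules straddling the boundary of $I$. Concretely (and this is what the paper does, applying \ref{lemma:L2} directly to the non-strong module $I$ rather than to $N$): since $I$ is not strong, \ref{lemma:L2} exhibits $I$ as a union of at least two but not all m.s.s.\ $M_1<\dots<M_l$ of a parallel or serial strong module $M'$; these m.s.s.\ are intervals by \ref{lemma:L3}, so one can pick consecutive $M_i\subseteq I$ and $M_{i+1}$ with $M_{i+1}\cap I=\emptyset$. Their union is an interval by construction and a module by the converse part of \ref{lemma:L2} (here it matters that $M'$ is parallel or serial --- an arbitrary union of m.s.s.\ of a prime module is not a module), hence a block by \ref{lemma:L1}, and it genuinely overlaps $I$ because $I$ contains at least one further $M_a$ with $a\neq i,i+1$. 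Without this construction your plan stalls precisely at the point you flagged as ``the main obstacle'', so as written the proof is incomplete.
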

\begin{proof}
	Suppose $I \subset [n]$ is a strong block.
	By \ref{lemma:L1} $I$ is a module.
	Thus it remains to prove that it is strong, so assume the contrary.
	By \ref{lemma:L2} it is the union of m.s.s. of a strong module $M'$. Write $M' = M_1 \cup \ldots \cup M_l$, where the $M_i$ are the m.s.s. We have already proven in \ref{lemma:L3} that they are intervals.
	Choose two consecutive ones $M_i, M_{i+1}$ such that $M_i \subset I$ and $M_{i+1} \cap I = \emptyset$. Then $M_i \cup M_{i+1}$ is an interval by construction and a module by \ref{lemma:L2}.
	Therefore it is a block by \ref{lemma:L1}. But this is a contradiction to the hypothesis that $I$ is strong.
\end{proof}

\bibliography{LiteraturModZer}
\bibliographystyle{amsplain}
\end{document}